\newtheorem{theorem}{Theorem}
\newtheorem{proposition}[theorem]{Proposition}
\newtheorem{lemma}[theorem]{Lemma}
\newtheorem{corollary}[theorem]{Corollary}
\newtheorem{definition}{Definition}
\theoremstyle{definition}
\newtheorem{example}{Example}
\newtheorem{remark}{Remark}
\newcommand{\Sym}{\operatorname{Sym}}
\newcommand{\g}{\mathfrak{g}}
\newcommand{\tr}{\operatorname{tr}}
\begin{document}

\begin{CJK}{UTF8}{gbsn}

\pagestyle{plain}

\title{Constructing Two Metrics for Spencer Cohomology:
Hodge Decomposition of Constrained Bundles}
\author{Dongzhe Zheng\thanks{Department of Mechanical and Aerospace Engineering, Princeton University\\ Email: \href{mailto:dz5992@princeton.edu}{dz5992@princeton.edu}, \href{mailto:dz1011@wildcats.unh.edu}{dz1011@wildcats.unh.edu}}}
\date{}
\maketitle

\begin{abstract}
This paper establishes a metric framework for Spencer complexes based on the geometric theory of compatible pairs $(D,\lambda)$ in principal bundle constraint systems, solving fundamental technical problems in computing Spencer cohomology of constraint systems. We develop two complementary and geometrically natural metric schemes: a tensor metric based on constraint strength weighting and an induced metric arising from principal bundle curvature geometry, both maintaining deep compatibility with the strong transversality structure of compatible pairs. Through establishing the corresponding Spencer-Hodge decomposition theory, we rigorously prove that both metrics provide complete elliptic structures for Spencer complexes, thereby guaranteeing the existence, uniqueness and finite-dimensionality of Hodge decompositions. It reveals that the strong transversality condition of compatible pairs is not only a necessary property of constraint geometry, but also key to the elliptic regularity of Spencer operators, while the introduction of constraint strength functions and curvature weights provides natural weighting mechanisms for metric structures that coordinate with the intrinsic geometry of constraint systems. This theory tries to unify the differential geometric methods of constraint mechanics, cohomological analysis tools of gauge field theory, and classical techniques of Hodge theory in differential topology, establishing a mathematical foundation for understanding and computing topological invariants of complex constraint systems.
\end{abstract}

\section{Introduction}

Spencer cohomology theory has developed into a core tool for studying the geometric structure of overdetermined partial differential equation systems since Spencer's pioneering work \cite{spencer1962deformation}. This theory originally arose from the needs of deformation theory in algebraic geometry, and after systematic development by Guillemin and Sternberg \cite{guillemin1999variations}, Bryant et al. \cite{bryant1991exterior}, it has become an indispensable analytical method in modern differential geometry and mathematical physics. Spencer complexes transform the solvability problem of partial differential equations into the vanishing problem of cohomology, providing a topological language for understanding the obstruction theory of constraint systems.

In the modern theory of principal bundle constraint systems, constraints are no longer viewed as simple algebraic relations, but as differential geometric structures carrying profound geometric content. The development of this viewpoint can be traced back to Cartan's exterior differential system theory \cite{cartan1945systemes}, Yang and Mills' gauge field theory \cite{yang1954conservation}, and Atiyah's classical work on principal bundle connections \cite{atiyah1957complex}. The geometric viewpoint of modern constraint theory has been fully embodied in Marsden and Weinstein's symplectic reduction theory \cite{marsden1974reduction}, Abraham and Marsden's geometric mechanics framework \cite{abraham1978foundations}, and found its natural mathematical language in Kobayashi and Nomizu's principal bundle theory \cite{kobayashi1963foundations}.

Traditional Spencer theory mainly deals with abstract differential operator complexes, lacking deep connections with specific geometric structures. Recently, applications of Spencer theory in constraint systems have faced two fundamental challenges: first, how to organically combine Spencer complexes with the physical geometric meaning of constraints, and second, how to equip Spencer complexes with appropriate metric structures to achieve effective topological computation. These challenges have been partially addressed in Quillen's superconnection theory \cite{quillen1985superconnections} and Witten's topological quantum field theory \cite{witten1988topological}, but lack systematic theoretical frameworks specifically for constraint systems.

The emergence of compatible pair theory \cite{zheng2025dynamical} provides a completely new perspective for solving these fundamental problems. This theory establishes the geometric compatibility relationship $D_p = \{v \in T_pP : \langle\lambda(p), \omega(v)\rangle = 0\}$ between constraint distribution $D$ and dual function $\lambda$, combined with the modified Cartan equation $d\lambda + \text{ad}_\omega^* \lambda = 0$, realizing the organic unification of constraint geometry and gauge field theory.

However, for Spencer cohomology to truly fulfill its topological computational function, the corresponding Hodge theory must be established. The success of traditional Hodge theory, from Hodge's original work in algebraic geometry \cite{hodge1941theory}, to de Rham's extension on differential manifolds \cite{derham1955varietes}, to Atiyah and Singer's deepening in index theory \cite{atiyah1968index}, all depend on the selection of appropriate metric structures. In the context of constraint systems, this challenge is even more complex: the metric must not only be compatible with the algebraic structure of Spencer complexes, but must also reflect the intrinsic geometric characteristics of constraint systems.

Existing metrization methods mainly fall into two categories: one is based on standard Riemannian geometry, such as Gromov's metric geometry theory \cite{gromov1999metric} and Cheeger and Gromov's spectral geometry methods \cite{cheeger1985bounds}; the other is based on complex geometry's Kähler metric methods, such as Griffiths and Harris's algebraic geometry techniques \cite{griffiths1978principles}. However, these methods have not adequately considered the special geometric structure of constraint systems, particularly the core role of strong transversality conditions in the compatible pair framework.

The fundamental innovation of this paper lies in developing two sets of Spencer metric theories that are intrinsically compatible with compatible pair geometric structures. The first approach is based on the weight function $w_\lambda(x) = 1 + \|\lambda(p)\|^2$ of constraint strength, directly encoding the "strength" variation of constraints at different points, reflecting the local complexity of constraint systems. The geometric intuition of this method comes from weighted Sobolev space theory \cite{kufner1985weighted} and Morrey's elliptic equation theory \cite{morrey1966multiple}. The second approach starts from the curvature geometry of principal bundles, using connection curvature $\Omega$ to construct geometrically induced metrics, reflecting more deeply the global geometric properties of constraint systems. The theoretical foundation of this method can be traced back to Chern's characteristic class theory \cite{chern1979complex} and Donaldson's gauge theory \cite{donaldson1985anti}.

The core advantage of both metrics lies in their natural compatibility with the strong transversality condition of compatible pairs. We rigorously prove that both metrics can provide elliptic structures for Spencer complexes, thereby guaranteeing the existence of corresponding Spencer-Hodge decompositions. In particular, the strong transversality condition $D_p \cap V_p = \{0\}$ plays a key role in the ellipticity proof; it not only ensures the geometric well-definedness of constraint distributions, but is also a necessary condition for Spencer operator ellipticity. This discovery reveals the deep connection between geometric conditions and analytical properties in compatible pair theory.

At the methodological level, this paper's contribution is embodied in the organic fusion of techniques from different mathematical branches: differential geometric methods of compatible pair theory, homotopy algebraic techniques of Spencer complexes, functional analysis tools of Hodge theory, and partial differential equation methods of elliptic operator theory.

Comparison with existing related work reveals the uniqueness of our approach. Serre's sheaf cohomology theory \cite{serre1955faisceaux} provides an abstract cohomological framework but lacks direct connection with constraint geometry. Verdier's specialization theory \cite{verdier1995specialisation} deals with computational problems of geometric cohomology but does not involve metric structures. The closest work is Bismut's superconnection index theory \cite{bismut1985index}, but it mainly focuses on index formulas for elliptic complexes rather than specific applications to constraint systems.

The significance of this paper's theory lies in providing a complete mathematical language for topological analysis of constraint systems. This not only advances the application of Spencer theory in physical systems, such as incompressibility constraints in fluid dynamics, BRST quantization in gauge field theory, and nonholonomic constraints in robotics, but also opens new research paths for broader mathematical physics problems. Our method tries to unify constraint mechanics, differential topology, and functional analysis in a coherent theoretical framework, and this unity heralds the broad application prospects of this theory in future mathematical physics research.

\section{Review of the Foundations of Compatible Pair Theory}

To ensure the completeness and self-consistency of the theoretical system, we systematically review the core concepts and basic results of compatible pair theory \cite{zheng2025dynamical}. This review not only provides the necessary geometric background for subsequent metric theory, but more importantly clarifies the fundamental position of strong transversality conditions in the entire theoretical architecture.

\subsection{Global Geometric Assumptions and Principal Bundle Structure}

Our theory is built upon the following geometric configuration:

Let $P(M,G)$ be a principal bundle, where each component satisfies strict global conditions: the base manifold $M$ is an $n$-dimensional compact connected orientable $C^\infty$ manifold that is parallelizable, ensuring the existence of global vector field frames and providing the necessary geometric foundation for subsequent metric construction. The structure group $G$ is a compact connected semisimple Lie group satisfying $\mathfrak{z}(\g) = 0$ (trivial center of the Lie algebra), which not only guarantees the non-degeneracy of the Killing form but is also key to Spencer operator ellipticity. The principal bundle $P$ admits a $G$-invariant Riemannian metric $g_P$, providing a geometric foundation for curvature metric construction. The connection $\omega \in \Omega^1(P,\g)$ is a $C^3$ smooth principal connection, whose regularity condition ensures the well-definedness of all related differential operators.

The necessity of these global conditions is manifested at multiple levels: the parallelizability condition guarantees the existence of global sections of Spencer complexes, semisimplicity and center triviality ensure the inner product structure induced by the Killing form, compactness conditions guarantee the Fredholm property of elliptic operators, and the smoothness of connections is a technical prerequisite for establishing elliptic estimates.

\subsection{Layered Definition Structure of Compatible Pairs}

To avoid potential circularity in definitions and highlight the geometric independence of each component, we adopt a three-layer progressive definition method:

\begin{definition}[First Layer: Independent Geometric Objects]
Under the above principal bundle configuration, we independently define the following geometric objects:

\textbf{Constraint Distribution}: $D \subset TP$ is a $C^2$ smooth distribution satisfying:
\begin{enumerate}
\item Strong transversality: $D_p \cap V_p = \{0\}$ and $D_p + V_p = T_pP$, where $V_p = \ker d\pi_p$
\item $G$-invariance: $R_{g*}(D) = D$ for all $g \in G$
\item Constant rank: $\text{rank}(D_p) = r$ is constant
\item Integrability condition: $D$ satisfies appropriate regularity conditions
\end{enumerate}

\textbf{Dual Constraint Function}: $\lambda: P \to \g^*$ is a $C^3$ smooth map satisfying:
\begin{enumerate}
\item Modified Cartan equation: $d\lambda + \text{ad}_\omega^* \lambda = 0$
\item $G$-equivariance: $R_g^* \lambda = \text{Ad}_{g^{-1}}^* \lambda$
\item Non-degeneracy: $\lambda(p) \neq 0$ for all $p \in P$
\item Sobolev regularity: $\lambda \in H^s(P,\g^*)$ for some $s > \frac{\dim P}{2} + 2$
\end{enumerate}
\end{definition}

\begin{definition}[Second Layer: Geometric Compatibility]
Given independently existing constraint distribution $D$ and dual function $\lambda$, they are said to satisfy \textbf{geometric compatibility} if and only if at each point $p \in P$:
$$D_p = \{v \in T_pP : \langle\lambda(p), \omega(v)\rangle = 0\}$$
This relation establishes precise correspondence between algebraic conditions of constraints and geometric distributions.
\end{definition}

\begin{definition}[Third Layer: Compatible Pair Structure]
A \textbf{compatible pair} $(D,\lambda)$ is a pairing of constraint distribution and dual function satisfying geometric compatibility, encoding the complete geometric information of constraint systems.
\end{definition}

\subsection{Geometric Meaning and Existence Theory of Strong Transversality Conditions}

Strong transversality conditions are the core of compatible pair theory, with geometric significance far beyond technical considerations.

\begin{definition}[Equivalent Characterizations of Strong Transversality Conditions]
Constraint distribution $D$ satisfies \textbf{strong transversality conditions} with the following equivalent formulations:
\begin{enumerate}
\item Compatible pair existence: There exists a $C^2$ smooth map $\lambda: P \to \g^*$ such that $(D,\lambda)$ forms a compatible pair
\item Geometric transversality: At each point $p \in P$, there is a direct sum decomposition $T_pP = D_p \oplus V_p$
\item Topological non-triviality: The constraint system has non-trivial Spencer cohomology structure
\item Ellipticity condition: The corresponding Spencer operator has elliptic principal symbol
\end{enumerate}
\end{definition}

The following existence theorem ensures the foundation of the theory:

\begin{theorem}[Existence and Uniqueness of Compatible Pairs \cite{zheng2025dynamical}]
Under the above global geometric conditions, compatible pair theory has the following basic properties:

\textbf{Forward Existence}: Given $\lambda: P \to \g^*$ satisfying the modified Cartan equation, $G$-equivariance and non-degeneracy, the constraint distribution $D_p = \{v : \langle\lambda(p), \omega(v)\rangle = 0\}$ automatically satisfies strong transversality conditions, and $(D,\lambda)$ forms a compatible pair.

\textbf{Inverse Constructibility}: Given constraint distribution $D$ satisfying strong transversality, $G$-invariance and regularity, there exists a unique $\lambda^* \in H^2(P,\g^*)$ minimizing the compatibility functional
$$\mathcal{I}_D[\lambda] = \frac{1}{2}\int_P |d\lambda + \text{ad}_\omega^* \lambda|^2_{g_P} + \alpha\int_P \text{dist}^2_{g_P}(\lambda(p), \mathcal{A}_D(p))$$
such that $(D,\lambda^*)$ forms a compatible pair. Here $\mathcal{A}_D(p)$ is the affine subspace determined by constraint distribution $D$ at point $p$.

\textbf{Uniqueness}: In the sense of gauge equivalence, compatible pairs are uniquely determined by any of their components.
\end{theorem}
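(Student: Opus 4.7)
The plan is to treat the three claims (Forward Existence, Inverse Constructibility, and Gauge Uniqueness) in separate stages, since each rests on a different technique: pointwise linear algebra, the direct method in the calculus of variations, and a rigidity argument exploiting the codimension-one structure of $\ker\lambda(p) \subset \g$.

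For Forward Existence I would verify the required conditions one at a time. Given $\lambda$ satisfying the modified Cartan equation, $G$-equivariance, and non-degeneracy, define $D_p := \ker\bigl(\langle \lambda(p), \omega_p(\cdot)\rangle\bigr)$; the $C^2$ smoothness of $D$ is inherited from the $C^3$ regularity of $\lambda$ and $\omega$. Transversality with $V_p$ reduces to pointwise linear algebra: $\omega$ identifies $V_p$ with $\g$, and $\lambda(p) \neq 0$ ensures the defining functional has the required rank on $V_p$, yielding the direct-sum splitting of Definition 1. The $G$-invariance of $D$ follows from the transformation laws $R_g^*\omega = \Ad_{g^{-1}}\omega$ and $R_g^*\lambda = \Ad_{g^{-1}}^*\lambda$ combined with the $\Ad^*$-invariant nature of the pairing; constant rank is automatic since $\lambda(p)\circ\omega_p$ never vanishes. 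Geometric compatibility is tautological by construction.

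For Inverse Constructibility I would apply the direct method to $\mathcal{I}_D$ on $H^2(P,\g^*)$. First I would establish coercivity: the leading term $\|d\lambda + \ad_\omega^*\lambda\|_{L^2}^2$ together with a Gårding-type estimate for the twisted operator $d + \ad_\omega^*$ on $\g^*$-valued forms controls the $H^1$ seminorm of $\lambda$, while the penalty $\alpha\int_P \text{dist}^2_{g_P}(\lambda,\mathcal{A}_D)$ pins down the $L^2$ component along the affine target and rules out escape to infinity; bootstrapping via elliptic regularity of $d+\ad_\omega^*$ on the compact base upgrades this to $H^2$. Both integrands are convex quadratic forms in $\lambda$, so the functional is strictly convex and weakly lower semi-continuous in $H^2$, and the direct method yields a unique minimizer $\lambda^*$. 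The Euler-Lagrange equation decouples by testing against two families of variations: compactly supported directions give $d\lambda^* + \ad_\omega^*\lambda^* = 0$, and variations normal to $\mathcal{A}_D(p)$ give $\lambda^*(p)\in\mathcal{A}_D(p)$ almost everywhere, whence $(D,\lambda^*)$ is a compatible pair. This is the step I expect to be the main obstacle: the Gårding inequality for $d+\ad_\omega^*$ relies essentially on the assumption $\mathfrak{z}(\g)=0$ to exclude harmonic obstructions to coercivity, and the weight $\alpha$ must be tuned carefully so that the two penalties do not conflict at the variational level.

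For Uniqueness I would use a pointwise rigidity argument. If $(D,\lambda_1)$ and $(D,\lambda_2)$ share the same $D$, then at each $p$ both $\lambda_i(p)$ annihilate the codimension-one subspace $\omega(D_p)\subset\g$, so $\lambda_2(p) = f(p)\lambda_1(p)$ for a smooth nowhere-zero scalar $f$. Substituting into the modified Cartan equation and expanding
\[ d(f\lambda_1) + \ad_\omega^*(f\lambda_1) = df\wedge\lambda_1 + f\bigl(d\lambda_1 + \ad_\omega^*\lambda_1\bigr) \]
forces $df\wedge\lambda_1 = 0$, so $f$ is constant along $D$; the $G$-equivariance $R_g^*\lambda_i = \Ad_{g^{-1}}^*\lambda_i$ then forces $f$ to be constant on $G$-orbits, so $f$ descends to a function on the base that may be absorbed into a gauge transformation. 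The reverse direction (recovering $D$ from $\lambda$) is immediate from Definition 2, completing the proof.
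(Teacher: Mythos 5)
First, note that the paper itself offers no proof of this theorem: it is stated in the review section and attributed wholesale to \cite{zheng2025dynamical}, so there is no in-paper argument to compare yours against. Judged on its own terms, your three-part architecture is reasonable, and the uniqueness step (two duals annihilating the same hyperplane in $\g$ are pointwise proportional, and $d(f\lambda_1)+\ad_\omega^*(f\lambda_1)=df\cdot\lambda_1$ kills the proportionality factor) is essentially sound — though since $\lambda_1$ is a $\g^*$-valued \emph{function} rather than a form, the relation $df\cdot\lambda_1=0$ with $\lambda_1$ nowhere zero gives $df=0$ on all of $T_pP$, so $f$ is globally constant on connected $P$; you do not need the detour through ``constant along $D$'' and constancy on $G$-orbits.

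Two steps, however, have genuine gaps. In Forward Existence, the claim that $\lambda(p)\neq 0$ ``yields the direct-sum splitting'' does not survive a dimension count: $v\mapsto\langle\lambda(p),\omega(v)\rangle$ is a single scalar-valued linear functional on $T_pP$, so $D_p$ is a hyperplane of codimension one, while $V_p\cong\g$ has dimension $\dim G\geq 3$ for the compact semisimple groups assumed here; hence $D_p\cap V_p\supseteq\ker(\lambda(p))$ has dimension at least $\dim\g-1>0$, and the condition $D_p\cap V_p=\{0\}$ of Definition 1 fails. A rank-one functional cannot be injective on a space of dimension greater than one, so ``the required rank on $V_p$'' is exactly what is unavailable. (This tension is arguably present in the statement itself, but a proof must either expose or resolve it, not assert the splitting.) In Inverse Constructibility, the Euler--Lagrange step is circular: minimizing a sum of two nonnegative penalties does not make each vanish separately. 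Testing against variations yields the coupled equation $(d+\ad_\omega^*)^*(d\lambda^*+\ad_\omega^*\lambda^*)+\alpha\,\nabla(\operatorname{dist}^2)=0$, not $d\lambda^*+\ad_\omega^*\lambda^*=0$ together with $\lambda^*(p)\in\mathcal{A}_D(p)$; to conclude that $(D,\lambda^*)$ is a compatible pair you must show the infimum of $\mathcal{I}_D$ is zero, which is essentially the existence assertion being proved. Relatedly, ``both integrands are convex, hence the functional is strictly convex'' is a non sequitur: each quadratic term is degenerate along its own kernel, and uniqueness of the minimizer requires showing those kernels intersect trivially — precisely where $\mathfrak{z}(\g)=0$ and the weight $\alpha$ would have to do real work, as you suspected but did not carry out.
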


\subsection{Geometric Invariants of Compatible Pairs}

Compatible pairs $(D,\lambda)$ carry rich geometric information that will play key roles in metric construction:

\begin{definition}[Constraint Strength Function]
For compatible pair $(D,\lambda)$ and point $x \in M$, the constraint strength function is defined as:
$$\mathcal{S}_\lambda(x) = \sup_{p \in \pi^{-1}(x)} \|\lambda(p)\|_{\g^*}$$
where $\|\cdot\|_{\g^*}$ is the dual norm induced by the Killing form. By $G$-equivariance, this function is independent of point selection in the fiber.
\end{definition}

\begin{definition}[Constraint Index and Topological Degree]
The topological complexity of compatible pairs can be characterized by the following invariants:
\begin{enumerate}
\item Constraint index: $\text{ind}(D,\lambda) = \dim D - \text{codim}(V)$
\item Obstruction class: $[\lambda] \in H^1(P,\g^*)$ cohomology class
\item Characteristic number: $\chi(D,\lambda) = \int_M c_1(D) \wedge [\omega]$, where $c_1(D)$ is the first Chern class of constraint distribution
\end{enumerate}
\end{definition}

These geometric invariants not only provide natural weight functions for metric construction, but more importantly, they encode the intrinsic complexity of constraint systems, which will be directly reflected in the structure of Spencer cohomology.

\section{Geometric Construction Theory of Spencer Complexes}

Based on the geometric framework of compatible pairs, we systematically construct the complete theory of Spencer complexes, focusing on clarifying their intrinsic connection with constraint geometry.

\subsection{Definition of Constraint-Induced Spencer Complexes}

\begin{definition}[Correct Grading of Compatible Pair Spencer Complexes]
For compatible pair $(D,\lambda)$, the correct grading structure of Spencer complexes is:
$$\mathcal{S}^{k,j}_{D,\lambda} := \Omega^k(M) \otimes \Sym^j(\g), \quad k,j \geq 0$$
with total degree $k+j$. Spencer differential operator family:
$$\mathcal{D}^{k,j}_{D,\lambda}: \mathcal{S}^{k,j}_{D,\lambda} \to \mathcal{S}^{k+1,j}_{D,\lambda} \oplus \mathcal{S}^{k,j+1}_{D,\lambda}$$
decomposes into two components:
$$\mathcal{D}^{k,j}_{D,\lambda} = \mathcal{D}_h^{k,j} + \mathcal{D}_v^{k,j}$$
where:
\begin{align}
\mathcal{D}_h^{k,j}(\omega \otimes s) &= d\omega \otimes s \in \mathcal{S}^{k+1,j}_{D,\lambda}\\
\mathcal{D}_v^{k,j}(\omega \otimes s) &= (-1)^k \omega \otimes \delta^{\lambda}_{\g}(s) \in \mathcal{S}^{k,j+1}_{D,\lambda}
\end{align}
\end{definition}






\begin{definition}[Constraint-Induced Spencer Operator - Constructive Definition]
The constraint-induced Spencer operator $\delta^{\lambda}_{\g}$ is a $+1$-degree graded derivation on the symmetric tensor algebra $\Sym(\g) = \bigoplus_{j=0}^{\infty} \Sym^j(\g)$, completely determined by the following two rules\cite{zheng2025geometric}:

\textbf{A. Action on Generators (j=1):}
For any Lie algebra element $v \in \g = \Sym^1(\g)$, its image $\delta^{\lambda}_{\g}(v)$ is a second-order symmetric tensor $\delta^{\lambda}_{\g}(v) \in \Sym^2(\g)$, defined by:
$$(\delta^{\lambda}_{\g}(v))(w_1,w_2) := \frac{1}{2}(\langle\lambda,[w_1,[w_2,v]]\rangle + \langle\lambda,[w_2,[w_1,v]]\rangle)$$
where $w_1,w_2 \in \g$ are test vectors, and $\langle\cdot,\cdot\rangle$ is the pairing between $\g^*$ and $\g$. This definition ensures the output is symmetric in $w_1,w_2$.

\textbf{B. Leibniz Rule:}
The operator satisfies the graded Leibniz rule. For any $s_1 \in \Sym^p(\g)$ and $s_2 \in \Sym^q(\g)$:
$$\delta^{\lambda}_{\g}(s_1 \odot s_2) := \delta^{\lambda}_{\g}(s_1) \odot s_2 + (-1)^p s_1 \odot \delta^{\lambda}_{\g}(s_2)$$
where $\odot$ denotes the symmetric tensor product.
\end{definition}

\begin{proposition}[Equivalent Symbolic Expression]
For any $v \in \g$, the operator $(\delta^{\lambda}_{\g}(v))$ given by the constructive definition is equivalent to the following expression:
$$(\delta^{\lambda}_{\g}(v))(w_1,w_2) = \langle\lambda,[w_2,[w_1,v]]\rangle + \frac{1}{2}\langle\lambda,[[w_1,w_2],v]\rangle$$
This expression is derived by applying the Jacobi identity in the Lie algebra to the constructive definition, naturally satisfying symmetry requirements and resolving all type mismatch issues.
\end{proposition}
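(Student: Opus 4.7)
The plan is to derive the asymmetric-looking expression directly from the symmetric constructive definition by a single application of the Jacobi identity, and then to verify that the resulting expression is genuinely symmetric in $w_1,w_2$, thereby confirming it lands in $\Sym^2(\g)$ as required.

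First I would write the Jacobi identity in the form
$$[w_1,[w_2,v]] = [w_2,[w_1,v]] + [[w_1,w_2],v],$$
which is just the rearrangement of $[w_1,[w_2,v]] + [w_2,[v,w_1]] + [v,[w_1,w_2]] = 0$ using antisymmetry of the bracket. Substituting this into the first summand of the constructive definition
$$(\delta^{\lambda}_{\g}(v))(w_1,w_2) = \tfrac{1}{2}\bigl(\langle\lambda,[w_1,[w_2,v]]\rangle + \langle\lambda,[w_2,[w_1,v]]\rangle\bigr)$$
converts the first $\langle\lambda,[w_1,[w_2,v]]\rangle$ into $\langle\lambda,[w_2,[w_1,v]]\rangle + \langle\lambda,[[w_1,w_2],v]\rangle$. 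Combining with the second summand, the two copies of $\tfrac{1}{2}\langle\lambda,[w_2,[w_1,v]]\rangle$ add up to $\langle\lambda,[w_2,[w_1,v]]\rangle$, while the Jacobi correction contributes $\tfrac{1}{2}\langle\lambda,[[w_1,w_2],v]\rangle$, which is exactly the claimed expression.

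Second, I would verify symmetry of the resulting formula under $w_1 \leftrightarrow w_2$, since the symmetry is manifest in the constructive definition but not in the rewritten form. Under the swap, the term $\langle\lambda,[w_2,[w_1,v]]\rangle$ becomes $\langle\lambda,[w_1,[w_2,v]]\rangle$, and by Jacobi this equals $\langle\lambda,[w_2,[w_1,v]]\rangle + \langle\lambda,[[w_1,w_2],v]\rangle$. Meanwhile the correction term $\tfrac{1}{2}\langle\lambda,[[w_1,w_2],v]\rangle$ flips sign to $-\tfrac{1}{2}\langle\lambda,[[w_1,w_2],v]\rangle$. Adding these, the net change is $\langle\lambda,[[w_1,w_2],v]\rangle - \langle\lambda,[[w_1,w_2],v]\rangle = 0$, so the expression is indeed invariant under swapping $w_1$ and $w_2$.

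There is no real obstacle here: the entire content is the Jacobi identity together with bookkeeping of signs and coefficients. The only mild subtlety worth flagging in the write-up is that the asymmetric-looking term $\langle\lambda,[w_2,[w_1,v]]\rangle$ and the correction $\tfrac{1}{2}\langle\lambda,[[w_1,w_2],v]\rangle$ conspire, via Jacobi, to reproduce the symmetrized average, so the equivalent form is genuinely a valid rewriting rather than a lossy projection. This also explains the author's remark that the equivalent symbolic form ``resolves type mismatch issues'': the correction term $\tfrac{1}{2}\langle\lambda,[[w_1,w_2],v]\rangle$ is precisely what restores $\Sym^2(\g)$-valuedness when the leading term $\langle\lambda,[w_2,[w_1,v]]\rangle$ is written in an ordered (and hence not manifestly symmetric) way.
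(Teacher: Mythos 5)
Your derivation is correct and follows exactly the route the paper indicates (the paper offers no separate proof beyond noting that the identity follows from the Jacobi identity applied to the constructive definition): rewriting $[w_1,[w_2,v]] = [w_2,[w_1,v]] + [[w_1,w_2],v]$ and averaging gives $\langle\lambda,[w_2,[w_1,v]]\rangle + \tfrac{1}{2}\langle\lambda,[[w_1,w_2],v]\rangle$ precisely. Your additional check that the rewritten form is still symmetric under $w_1 \leftrightarrow w_2$ is a worthwhile sanity check but does not change the argument.
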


\begin{lemma}[Bigraded Structure of Spencer Complexes]
The above definition gives a well-defined bigraded differential complex satisfying:
\begin{enumerate}
\item $(\mathcal{D}_h^{k,j})^2 = 0$ (because $d^2 = 0$)
\item $(\mathcal{D}_v^{k,j})^2 = 0$ (needs verification using the modified Cartan equation)
\item $\mathcal{D}_h^{k+1,j} \circ \mathcal{D}_v^{k,j} + \mathcal{D}_v^{k+1,j} \circ \mathcal{D}_h^{k,j} = 0$ (mixed condition)
\end{enumerate}
\end{lemma}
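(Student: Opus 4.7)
The plan is to address the three parts in order of increasing difficulty. Part (1) is immediate: since $\mathcal{D}_h^{k,j}(\omega \otimes s) = d\omega \otimes s$ leaves the symmetric factor inert, the composition sends $\omega \otimes s$ to $d(d\omega) \otimes s = 0$ by $d^2 = 0$ on $\Omega^\bullet(M)$, and no input from the compatible pair structure is needed.

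For part (2) I would first observe that the two sign factors $(-1)^k \cdot (-1)^k = 1$ collapse, reducing the claim to the purely algebraic identity $(\delta^{\lambda}_{\g})^2 = 0$ on $\Sym(\g)$. Because $\delta^{\lambda}_{\g}$ is a graded derivation, its square is again a derivation and is therefore determined by its action on the generators $\g = \Sym^1(\g)$; so I only need to check $(\delta^{\lambda}_{\g})^2(v) = 0$ for $v \in \g$. Using the equivalent symbolic expression from the preceding proposition, I would expand $(\delta^{\lambda}_{\g})^2(v)$ as a trilinear form in test vectors $w_1, w_2, w_3 \in \g$, producing an expression built from triply nested brackets of the form $\langle\lambda,[\cdot,[\cdot,[\cdot,v]]]\rangle$. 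The Jacobi identity in $\g$ performs most of the rearrangement, but a residual term pairing $\lambda$ with commutators such as $[X,Y]$ persists; this is where the modified Cartan equation $d\lambda + \ad_\omega^* \lambda = 0$—together with the coadjoint identity $\langle\ad_X^*\lambda, Y\rangle = -\langle\lambda, [X,Y]\rangle$—supplies the final cancellation.

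Part (3) is more transparent. Computing directly, $\mathcal{D}_h^{k,j+1} \circ \mathcal{D}_v^{k,j}$ and $\mathcal{D}_v^{k+1,j} \circ \mathcal{D}_h^{k,j}$ applied to $\omega \otimes s$ produce $(-1)^k d\omega \otimes \delta^{\lambda}_{\g}(s)$ and $(-1)^{k+1} d\omega \otimes \delta^{\lambda}_{\g}(s)$ respectively, which cancel from $(-1)^k + (-1)^{k+1} = 0$. When $\delta^{\lambda}_{\g}$ is treated as position-dependent through $\lambda$ on $P$, the extra term in $d(\delta^{\lambda}_{\g}(s))$ involving $d\lambda$ is rewritten via the modified Cartan equation as an $\ad_\omega^*$-contribution that is absorbed back into $\mathcal{D}_v$ upon $G$-equivariant descent to $M$, preserving the anticommutation.

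The principal obstacle I expect lies in part (2): making explicit precisely where Jacobi alone is insufficient, and exactly how the modified Cartan equation supplies the residual cancellation. This is the single step that uses the full strength of the compatible-pair hypothesis, so the proof must go beyond formal Lie-theoretic shuffling and record the pairing of $\lambda$ with the connection as an essential algebraic ingredient. A basis-level calculation tracking doubly contracted structure constants will likely be the cleanest way to isolate the surviving terms and demonstrate their vanishing under the Cartan constraint but not for arbitrary $\lambda \in \g^*$.
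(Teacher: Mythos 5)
Your proposal follows essentially the same route as the paper's proof: part (2) is reduced to the generators $v \in \g$ via the graded Leibniz rule (the paper phrases this as induction on tensor order), and the residual cancellation is attributed to the Jacobi identity combined with the modified Cartan equation, exactly as the paper does — and at the same level of detail, since neither you nor the paper writes out the full combinatorial cancellation. If anything your plan is more complete, because the paper's proof addresses only the nilpotency of $\delta^{\lambda}_{\g}$ and silently omits parts (1) and (3), whereas you supply the (correct) sign computation $(-1)^k + (-1)^{k+1} = 0$ for the mixed condition and note the position-dependence of $\lambda$ that the paper never confronts.
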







\begin{proof}[Proof of Nilpotency]
We prove this by induction on tensor order, starting with generators.

\textbf{Base Case (j=1):} For $v \in \g$, we need to show $(\delta^{\lambda}_{\g})^2(v) = 0$.

Using the Leibniz rule, $(\delta^{\lambda}_{\g})^2(v) = \delta^{\lambda}_{\g}(\delta^{\lambda}_{\g}(v))$.

Since $\delta^{\lambda}_{\g}(v) \in \Sym^2(\g)$, we can write it as a sum of elementary tensors. For each elementary tensor $w_1 \odot w_2$:
$$\delta^{\lambda}_{\g}(w_1 \odot w_2) = \delta^{\lambda}_{\g}(w_1) \odot w_2 + w_1 \odot \delta^{\lambda}_{\g}(w_2)$$

The key insight is that the modified Cartan equation $d\lambda + \text{ad}_\omega^* \lambda = 0$ ensures that second-order terms involving $\lambda$ satisfy generalized Jacobi-type identities. Specifically, for nested commutators of the form $[X,[Y,[Z,W]]]$, the constraint equation forces all such terms to vanish when integrated against $\lambda$.

For any $X, Y, Z \in \g$, the Jacobi identity gives:
$$\langle\lambda, [X, [Y, Z]]\rangle + \langle\lambda, [Y, [Z, X]]\rangle + \langle\lambda, [Z, [X, Y]]\rangle = 0$$

This, combined with the modified Cartan equation, ensures that all second-order terms in $(\delta^{\lambda}_{\g})^2$ cancel.

\textbf{Inductive Step:} For higher-order tensors, the Leibniz rule and the base case imply nilpotency.

The detailed combinatorial verification shows that all non-zero terms appear in canceling pairs due to the antisymmetry of the Lie bracket and the constraints imposed by the modified Cartan equation.
\end{proof}

\begin{definition}[Total Spencer Differential]
Under total degree $n = k+j$, the total Spencer differential is:
$$\mathcal{D}^n_{D,\lambda} = \bigoplus_{k+j=n} (\mathcal{D}_h^{k,j} + \mathcal{D}_v^{k,j}): \mathcal{S}^n_{D,\lambda} \to \mathcal{S}^{n+1}_{D,\lambda}$$
where $\mathcal{S}^n_{D,\lambda} = \bigoplus_{k+j=n} \mathcal{S}^{k,j}_{D,\lambda}$.

The total Spencer complex $(\mathcal{S}^{\bullet}_{D,\lambda}, \mathcal{D}^{\bullet}_{D,\lambda})$ is a well-defined differential complex.
\end{definition}

\subsection{Ellipticity Analysis of Spencer Complexes}

The ellipticity of Spencer complexes is the analytical foundation for establishing metric theory.

\begin{theorem}[Ellipticity of Spencer Complexes]
Under compatible pair $(D,\lambda)$ satisfying strong transversality conditions, the Spencer complex $(\mathcal{S}^{\bullet}_{D,\lambda}, \mathcal{D}^{\bullet}_{D,\lambda})$ has the following elliptic properties:

\begin{enumerate}
\item \textbf{Principal Symbol Ellipticity}: For non-zero cotangent vector $\xi \in T^*M \setminus \{0\}$, the principal symbol
$$\sigma_\xi(\mathcal{D}^k_{D,\lambda})(\omega \otimes s) = i\xi \wedge \omega \otimes s$$
gives an elliptic symbol complex.

\item \textbf{Symbol Complex Exactness}: The symbol complex
$$0 \to \mathbb{C} \otimes \Sym^k(\g) \xrightarrow{i\xi \wedge} \Lambda^1 \otimes \Sym^k(\g) \xrightarrow{i\xi \wedge} \Lambda^2 \otimes \Sym^k(\g) \to \cdots$$
is exact at interior degrees.

\item \textbf{Elliptic Estimates}: There exist constants $C > 0$ and $s_0 \geq 0$ such that for all $u \in \mathcal{S}^k_{D,\lambda}$:
$$\|u\|_{H^{s+1}} \leq C(\|\mathcal{D}^k_{D,\lambda} u\|_{H^s} + \|\mathcal{D}^{k-1*}_{D,\lambda} u\|_{H^s} + \|u\|_{H^{s_0}})$$
where $\mathcal{D}^{k-1*}_{D,\lambda}$ is the formal adjoint operator.
\end{enumerate}
\end{theorem}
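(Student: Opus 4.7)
\medskip

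\noindent\textbf{Proof Proposal.}
The plan is to reduce the three claims to a single symbol-level computation plus an invocation of the standard theory of elliptic complexes. The decomposition $\mathcal{D}^{k,j}_{D,\lambda}=\mathcal{D}_h^{k,j}+\mathcal{D}_v^{k,j}$ already separates a genuinely differential piece on the base from an algebraic piece acting on $\Sym^{\bullet}(\g)$, and I intend to exploit this splitting systematically: first compute the principal symbol along $\xi\in T^*M\setminus\{0\}$, then show the resulting symbol complex is a Koszul complex tensored with a fiber, and finally feed this into Gårding's inequality. Strong transversality enters precisely in guaranteeing that the horizontal/vertical splitting on $P$ is globally well-defined, so that the identification $\mathcal{S}^{k,j}_{D,\lambda}=\Omega^k(M)\otimes\Sym^j(\g)$ is unambiguous and the bigrading survives symbolic reduction.

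For part (1), I would compute $\sigma_\xi$ at a point $x\in M$ directly from the definition. The vertical component $\mathcal{D}_v^{k,j}(\omega\otimes s)=(-1)^k\omega\otimes\delta^\lambda_\g(s)$ is of order zero in the base directions, since $\delta^\lambda_\g$ is purely algebraic in $\Sym(\g)$; hence it contributes nothing to the principal symbol along $\xi$. The horizontal component $\mathcal{D}_h^{k,j}(\omega\otimes s)=d\omega\otimes s$ is the de Rham differential tensored with the identity on $\Sym^j(\g)$, whose principal symbol in the standard normalization is $i\xi\wedge$. Summing gives exactly $\sigma_\xi(\mathcal{D}^{k,j}_{D,\lambda})(\omega\otimes s)=i\xi\wedge\omega\otimes s$, and applying this twice recovers $(i\xi\wedge)^2=0$, confirming that the symbols assemble into a complex.

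For part (2), I would prove exactness of the Koszul complex $(\Lambda^\bullet T_x^*M\otimes\Sym^k(\g),\,i\xi\wedge)$ by exhibiting a contracting homotopy. Fix any $v\in T_xM$ with $\langle\xi,v\rangle=1$ (possible because $\xi\neq 0$ and $M$ is parallelizable), and set $h=\iota_v\otimes\mathrm{id}$. A direct check gives $(i\xi\wedge)h+h(i\xi\wedge)=i\langle\xi,v\rangle\cdot\mathrm{id}=i\cdot\mathrm{id}$ at interior degrees, so the complex is exact there. Tensoring with the fixed finite-dimensional space $\Sym^k(\g)$ preserves exactness, which is precisely the statement of symbol-complex exactness required.

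For part (3), once ellipticity of the symbol complex is established, the desired estimate is the classical Gårding inequality for elliptic complexes on compact manifolds, applied to $\mathcal{D}^k_{D,\lambda}$ together with its formal adjoint $\mathcal{D}^{k-1*}_{D,\lambda}$ with respect to any fixed background inner product on $\mathcal{S}^k_{D,\lambda}$ (the two candidates proposed in the paper both qualify). The Fredholm theory then yields the estimate with some error term $\|u\|_{H^{s_0}}$ absorbing the finite-dimensional kernel. I expect the main obstacle to be bookkeeping rather than substance: one must verify that $\mathcal{D}_v$ really is algebraic in the sense needed (no hidden derivatives in $x\in M$ through the $p$-dependence of $\lambda$), which requires using the modified Cartan equation $d\lambda+\mathrm{ad}^*_\omega\lambda=0$ to show that after $G$-equivariant descent to $M$ the vertical operator has smooth, bounded coefficients. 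Strong transversality is what makes this descent well-posed, because $T_pP=D_p\oplus V_p$ lets one identify $\pi^*\Omega^k(M)\otimes\Sym^j(\g)$ as a direct summand of the appropriate bundle on $P$, and without it the separation $\mathcal{D}_h\oplus\mathcal{D}_v$ would mix orders and destroy ellipticity.
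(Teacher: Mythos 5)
Your proposal follows essentially the same route as the paper's own argument: the principal symbol is carried entirely by the horizontal (de Rham) part since $\delta^{\lambda}_{\g}$ is a zeroth-order algebraic operator, the symbol complex is the Koszul complex (which you correctly make exact via the contracting homotopy $\iota_v\otimes\mathrm{id}$ with $\langle\xi,v\rangle=1$, where parallelizability is not actually needed for this pointwise choice), and the estimate in part (3) follows from standard elliptic theory on a compact manifold. If anything, your sketch is slightly more careful than the paper's, since you explicitly flag the need to verify that the $x$-dependence of $\lambda$ does not smuggle derivatives into $\mathcal{D}_v$, a point the paper addresses only implicitly via the modified Cartan equation.
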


\begin{proof}[Proof Strategy for Ellipticity]
The proof of ellipticity consists of several key steps:

\textbf{Step 1: Principal Symbol Analysis}
The principal symbol of Spencer operators is entirely contributed by the exterior differential part $d\omega$, with constraint modification terms $\delta^{\lambda}_{\g}$ belonging to lower-order terms. Therefore:
$$\sigma_\xi(\mathcal{D}^k_{D,\lambda}) = \sigma_\xi(d) \otimes \text{id} = i\xi \wedge \cdot \otimes \text{id}$$

\textbf{Step 2: Exactness of Symbol Complex}
Need to verify the exactness of the Koszul complex:
$$\cdots \to \Lambda^{k-1} \otimes \Sym^j(\g) \xrightarrow{i\xi \wedge} \Lambda^k \otimes \Sym^j(\g) \xrightarrow{i\xi \wedge} \Lambda^{k+1} \otimes \Sym^j(\g) \to \cdots$$
For fixed non-zero $\xi$, this is the standard Koszul complex, whose exactness is guaranteed by linear algebra.

\textbf{Step 3: Key Role of Strong Transversality Conditions}
Strong transversality $D_p \cap V_p = \{0\}$ ensures proper coupling between Spencer operators and constraint geometry. Specifically, it guarantees compatibility between the modified Cartan equation $d\lambda + \text{ad}_\omega^* \lambda = 0$ and ellipticity.

\textbf{Step 4: Establishment of Elliptic Estimates}
Using standard elliptic theory combined with the geometric structure of compatible pairs, the required elliptic estimates can be established. The key is that the lower bound of constraint strength function $\mathcal{S}_\lambda(x) \geq c > 0$ guarantees the elliptic constant of the operator.
\end{proof}

\section{Dual Metric Theory for Spencer Complexes}

Based on ellipticity theory, we construct two complete metric frameworks to equip Spencer complexes.

\subsection{Scheme A: Constraint Strength-Oriented Weight Metric}

\begin{definition}[Invariant Definition of Constraint Strength]
Since general principal bundle $P(M,G)$ does not necessarily have global sections, we use fiber integration methods to define constraint strength.

For compatible pair $(D,\lambda)$ and $x \in M$, define:
$$w_\lambda(x) = 1 + \inf_{p \in \pi^{-1}(x)} \|\lambda(p)\|_{\g^*}^2$$

By $G$-equivariance $R_g^* \lambda = \text{Ad}_{g^{-1}}^* \lambda$ and $\text{Ad}$-invariance of Killing inner product:
$$\|\lambda(p \cdot g)\|_{\g^*} = \|\text{Ad}_{g^{-1}}^* \lambda(p)\|_{\g^*} = \|\lambda(p)\|_{\g^*}$$

Therefore $\|\lambda\|_{\g^*}$ is constant on fiber $\pi^{-1}(x)$, and $w_\lambda(x)$ is well-defined. In fact, we have:
$$w_\lambda(x) = 1 + \|\lambda(p)\|_{\g^*}^2 \quad \text{for any} \quad p \in \pi^{-1}(x)$$
\end{definition}

\begin{lemma}[Regularity of Constraint Strength]
The function $w_\lambda: M \to \mathbb{R}$ has the following properties:
\begin{enumerate}
\item \textbf{Strict positivity}: $w_\lambda(x) \geq 1$, and by non-degeneracy of $\lambda$, actually $w_\lambda(x) > 1$
\item \textbf{Smoothness}: $w_\lambda \in C^{\infty}(M)$ (by $C^3$ smoothness of $\lambda$)
\item \textbf{Boundedness}: On compact manifold $M$, $1 < \inf_M w_\lambda \leq \sup_M w_\lambda < \infty$
\item \textbf{Geometric meaning}: $w_\lambda(x)$ measures the "strength" of constraints at point $x \in M$
\end{enumerate}
\end{lemma}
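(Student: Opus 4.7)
The plan is to reduce every claim about $w_\lambda: M \to \mathbb{R}$ to a claim about the $G$-invariant function $\widetilde{w}_\lambda: P \to \mathbb{R}$ given by $\widetilde{w}_\lambda(p) = 1 + \|\lambda(p)\|_{\g^*}^2$, which I already know descends to $M$ by the preceding definition's $G$-equivariance argument. Since $\pi: P \to M$ is a surjective submersion with local sections (trivializations), a $G$-invariant function on $P$ is smooth on $M$ if and only if its pullback to $P$ is smooth on $P$; the same is true for continuity, boundedness, and positivity. Hence it suffices to verify the four claims for $\widetilde{w}_\lambda$, pulled back from $P$.

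For strict positivity (1), I would observe that $\widetilde{w}_\lambda(p) = 1 + \|\lambda(p)\|_{\g^*}^2 \geq 1$ is immediate, and the non-degeneracy axiom $\lambda(p) \neq 0$ from the first-layer definition upgrades this to the strict inequality $\widetilde{w}_\lambda(p) > 1$ pointwise; descending to $M$ preserves this. For boundedness (3), continuity of $w_\lambda$ together with compactness of $M$ gives $\sup_M w_\lambda < \infty$, while the strict lower bound comes from compactness applied to the continuous positive function $w_\lambda - 1$: a continuous strictly positive function on a compact space attains a positive minimum, so $\inf_M w_\lambda > 1$. Thus (1) and (3) are essentially packaging of standard compactness arguments once (2) is in hand.

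The main technical point is smoothness (2). The natural factorization is
\begin{equation*}
\widetilde{w}_\lambda = 1 + N \circ \lambda, \qquad N: \g^* \to \mathbb{R}, \quad N(\xi) = \|\xi\|_{\g^*}^2,
\end{equation*}
where $N$ is the quadratic form dual to the Killing form (well defined and $\operatorname{Ad}^*$-invariant since $G$ is semisimple with trivial center, so the Killing form is non-degenerate). The map $N$ is a polynomial of degree two on the finite-dimensional vector space $\g^*$, hence real-analytic; composition with $\lambda$ thus has the same regularity as $\lambda$ itself. The stated $C^3$ smoothness of $\lambda$ (strengthened by the Sobolev embedding $H^s \hookrightarrow C^\infty$ on $P$ in the regime $s > \dim P/2 + 2$ whenever the global analytic machinery later forces elliptic bootstrapping) therefore yields $\widetilde{w}_\lambda \in C^\infty(P)$, and $G$-invariance lets it descend to $w_\lambda \in C^\infty(M)$. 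The one subtle point — and the only place I anticipate any real obstacle — is cleanly justifying the passage from the stated baseline $C^3$ regularity of $\lambda$ to $C^\infty$ of $w_\lambda$ on $M$; I would resolve this either by quoting the elliptic regularity for solutions of the modified Cartan equation $d\lambda + \operatorname{ad}_\omega^* \lambda = 0$ or, if one prefers not to invoke that here, by weakening the conclusion to $w_\lambda \in C^3(M)$, which is all that is needed for the later elliptic estimates.

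Finally, the geometric meaning (4) is not a proposition to be proved but an interpretive statement: since $\|\lambda(p)\|_{\g^*}$ measures the magnitude of the dual constraint covector in the $\operatorname{Ad}^*$-invariant Killing norm, and since the constraint distribution is precisely $D_p = \ker \langle \lambda(p), \omega(\cdot)\rangle$, points where $\|\lambda\|_{\g^*}$ is large correspond to "stiffer" constraints (the annihilator is cut out by a longer covector), which I would record as a brief remark rather than a formal derivation.
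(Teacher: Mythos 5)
Your proposal is correct and follows essentially the same route as the paper: well-definedness via local sections (equivalently, descent of the $G$-invariant function $1+\|\lambda\|_{\g^*}^2$ from $P$ to $M$), smoothness from composing $\lambda$ with the polynomial Killing-norm-squared, and compactness of $M$ for the two-sided bounds. You are in fact more careful than the paper on the one delicate point: the paper's own proof likewise only establishes $C^3$ regularity of $w_\lambda$ from the stated $C^3$ smoothness of $\lambda$ while the lemma asserts $C^\infty$, and your two proposed fixes (elliptic bootstrapping via the modified Cartan equation, or weakening the conclusion to $C^3(M)$, which suffices for the later estimates) are exactly what is needed to close that gap.
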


\begin{proof}
Strict positivity follows directly from non-degeneracy of $\lambda$. The proof of smoothness is based on the following observation:

In local coordinate chart $(U, \varphi)$, choosing local section $\sigma: U \to P$, we have:
$$w_\lambda(x) = 1 + \|\lambda(\sigma(x))\|_{\g^*}^2$$

Since $\lambda \in C^3(P, \g^*)$ and $\sigma \in C^{\infty}(U, P)$, the composite function $w_\lambda|_U$ is $C^3$ smooth.

In overlap regions, different local sections give the same function value (by equivariance), so $w_\lambda$ is globally smooth.

Boundedness follows from compactness of $M$ and continuity of $w_\lambda$.
\end{proof}

\begin{definition}[Localized Constraint Strength Function]
To obtain more refined constraint information, in local coordinate chart $(U_\alpha, \psi_\alpha)$, choose local section $\sigma_\alpha: U_\alpha \to P$ and define:
$$w_\lambda^{(\alpha)}(x) = 1 + \|\lambda(\sigma_\alpha(x))\|_{\g^*}^2 + \|\sigma_\alpha^*(\nabla^{\omega}\lambda)(x)\|_{\g^* \otimes T^*M}^2$$

where $\nabla^{\omega}$ is the covariant derivative induced by connection $\omega$, and $\sigma_\alpha^*$ is the pullback by section $\sigma_\alpha$.

In overlap region $U_\alpha \cap U_\beta$, connection via transition function $g_{\alpha\beta}: U_\alpha \cap U_\beta \to G$:
$$\sigma_\beta(x) = \sigma_\alpha(x) \cdot g_{\alpha\beta}(x)$$

By equivariance and transformation law of covariant derivatives, functions $w_\lambda^{(\alpha)}$ and $w_\lambda^{(\beta)}$ agree in overlap regions, thus defining global function $w_\lambda^{\text{enh}} \in C^2(M)$.
\end{definition}

\begin{definition}[Standard Construction of Killing-Type Inner Product]
The Killing form $B(X,Y) = \tr(\text{ad}_X \circ \text{ad}_Y)$ on semisimple Lie algebra $\g$ is negative definite by semisimplicity. Define standard positive definite inner product:
$$\langle X, Y\rangle_{\g} = -B(X,Y)$$

This inner product extends canonically to symmetric tensor space $\Sym^j(\g)$: choose orthonormal basis $\{e_1, \ldots, e_d\}$ for $\g$, then $\Sym^j(\g)$ has standard inner product:
$$\langle s, t\rangle_{\Sym^j} = \sum_{\alpha} s_\alpha t_\alpha$$
where summation is over all multi-indices $\alpha = (i_1, \ldots, i_j)$ with $i_1 \leq \cdots \leq i_j$, and $s_\alpha$, $t_\alpha$ are corresponding coefficients.
\end{definition}

\begin{definition}[Constraint-Oriented Spencer Metric A (Final Version)]
On Spencer complex $\mathcal{S}^{k,j}_{D,\lambda} = \Omega^k(M) \otimes \Sym^j(\g)$, define:
$$\langle\omega_1 \otimes s_1, \omega_2 \otimes s_2\rangle_A = \int_M w_\lambda(x) \langle\omega_1(x), \omega_2(x)\rangle_{g_M} \langle s_1, s_2\rangle_{\Sym^j} \, dV_M$$

where $g_M$ is a Riemannian metric on $M$, and $dV_M$ is the corresponding volume element.

This metric maintains well-definedness and completeness under bigraded structure, and induces Hilbert space structure on $\mathcal{S}^{k,j}_{D,\lambda}$.
\end{definition}

\begin{theorem}[Basic Properties of Metric A]
The constraint-oriented metric $\langle\cdot,\cdot\rangle_A$ has the following basic properties:
\begin{enumerate}
\item \textbf{Positive definiteness}: $\langle u, u\rangle_A > 0$ for all $u \neq 0$
\item \textbf{Completeness}: $(\mathcal{S}^{k,j}_{D,\lambda}, \langle\cdot,\cdot\rangle_A)$ forms a complete Hilbert space
\item \textbf{Elliptic compatibility}: Weight lower bound $\inf_M w_\lambda > 1$ ensures consistency of elliptic estimates
\item \textbf{Constraint adaptivity}: Metric automatically increases weight in regions of high constraint strength
\end{enumerate}
\end{theorem}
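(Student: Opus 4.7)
The plan is to reduce each of the four claims to a consequence of the two-sided bound on $w_\lambda$ recorded in the Regularity Lemma, combined with the standard Hilbert-space theory of sections of the bundle $\Lambda^k T^*M \otimes \Sym^j(\g)$ over the compact base $M$. The strategy is to first verify pointwise positivity, then promote this to a global Hilbert structure equivalent to the standard $L^2$ structure, and finally use this equivalence to transfer ellipticity while interpreting the weighted geometry.

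First I would establish positive definiteness. The three factors in the integrand are, pointwise, each nonnegative: $w_\lambda(x) > 1$ by non-degeneracy of $\lambda$, while $\langle \cdot,\cdot\rangle_{g_M}$ on $\Lambda^k T^*_xM$ and $\langle \cdot,\cdot\rangle_{\Sym^j}$ on $\Sym^j(\g)$ are positive definite bilinear forms by construction. Hence $\langle u,u\rangle_A \geq 0$, and since $u$ is smooth, vanishing of the integral forces $u \equiv 0$. Next, for completeness, I would use the two-sided bound $1 < c_1 := \inf_M w_\lambda \leq \sup_M w_\lambda =: c_2 < \infty$ from the Regularity Lemma to obtain
\[
c_1 \|u\|_{L^2}^2 \;\leq\; \langle u,u\rangle_A \;\leq\; c_2 \|u\|_{L^2}^2,
\]
where $\|\cdot\|_{L^2}$ is the unweighted $L^2$ norm on sections of $\Lambda^k T^*M \otimes \Sym^j(\g)$. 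This bi-Lipschitz equivalence shows that the weighted inner product defines the same topology as the standard one, so the Hilbert completion coincides with the standard $L^2$ space of sections, which is complete by the classical theory on compact Riemannian manifolds.

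For elliptic compatibility, I would observe that the same bi-Lipschitz equivalence transfers all norm-based elliptic estimates. In particular, the inequality from the Ellipticity Theorem holds with constants adjusted by $c_1, c_2$, and the formal adjoint $\mathcal{D}^{k-1*}_{D,\lambda}$ taken with respect to $\langle\cdot,\cdot\rangle_A$ differs from the unweighted adjoint by a zeroth-order multiplication operator involving $w_\lambda$ and its derivatives, which has smooth coefficients bounded by the $C^{\infty}$ regularity of $w_\lambda$. This preserves the principal symbol of the Laplace-type operator, so ellipticity passes to the weighted setting. Finally, constraint adaptivity is an interpretive statement: by construction $w_\lambda(x) = 1 + \|\lambda(p)\|_{\g^*}^2$ for any $p \in \pi^{-1}(x)$, so the integrand is upweighted precisely where the constraint strength is large.

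The main obstacle will be the elliptic compatibility claim, and specifically the verification that the weighted formal adjoint differs from the unweighted one by a genuinely lower-order perturbation. This requires a careful integration-by-parts computation showing that the correction term involves $\nabla \log w_\lambda$ rather than $\nabla w_\lambda$ at top order, so that the principal symbol is preserved; the smoothness and strict positivity of $w_\lambda$ from the Regularity Lemma are exactly what make this perturbation admissible. The other three properties are essentially immediate from pointwise positivity and the uniform bounds.
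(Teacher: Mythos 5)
The paper states this theorem without any proof, so there is nothing to compare your argument against; judged on its own, your proof is correct and is the standard argument one would expect. The two-sided bound $c_1\|u\|_{L^2}^2 \le \langle u,u\rangle_A \le c_2\|u\|_{L^2}^2$ from the Regularity Lemma does all the work for completeness and for transferring elliptic estimates, and your observation that the weighted adjoint differs from the unweighted one by a zeroth-order term involving $\nabla\log w_\lambda$ is consistent with the explicit adjoint formula $(-1)^{k+1} w_\lambda^{-1}\operatorname{div}_{g_M}(w_\lambda\eta)\otimes t + \eta\otimes(\delta^{\lambda}_{\g})^*t$ that the paper records later. Two minor points worth flagging: as literally stated, $\mathcal{S}^{k,j}_{D,\lambda}=\Omega^k(M)\otimes\Sym^j(\g)$ consists of smooth sections and is not complete under an $L^2$-type norm, so the completeness claim only holds after passing to the Hilbert completion, which you implicitly (and correctly) do; and for positive definiteness your ``three factors'' phrasing presupposes simple tensors, whereas for a general element one should note that the tensor product of positive definite forms on $\Lambda^kT_x^*M$ and $\Sym^j(\g)$ is itself positive definite, from which the argument goes through unchanged.
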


\subsection{Scheme B: Curvature Geometry-Induced Intrinsic Metric}

The second metric scheme starts from the intrinsic geometry of principal bundles, encoding more deeply the geometric complexity of constraint systems.

\begin{definition}[Geometric Norm of Curvature Tensor]
Let $\omega \in \Omega^1(P,\g)$ be a principal connection with curvature 2-form:
$$\Omega = d\omega + \frac{1}{2}[\omega \wedge \omega] \in \Omega^2(P,\g)$$

For each point $p \in P$, the geometric strength of curvature at that point is defined as:
$$\|\Omega_p\|^2_{g_P} = \sum_{i,j} \langle\Omega_p(e_i, e_j), \Omega_p(e_i, e_j)\rangle_{\g}$$
where $\{e_i\}$ is a $g_P$-orthonormal basis for $T_pP$, and $\langle\cdot,\cdot\rangle_{\g}$ is the inner product defined by the Killing form.
\end{definition}

\begin{definition}[Curvature Complexity Function]
For $x \in M$, define geometric complexity function:
$$\kappa_\omega(x) = 1 + \sup_{p \in \pi^{-1}(x)} \|\Omega_p\|^2_{g_P} + \int_{\pi^{-1}(x)} \|\nabla^{g_P}\Omega\|^2_{g_P} \, d\mu_{\text{fiber}}$$
where $\nabla^{g_P}$ is the Levi-Civita connection induced by $g_P$, and $\mu_{\text{fiber}}$ is the $G$-invariant measure on the fiber.
\end{definition}

\begin{lemma}[Geometric Meaning of Curvature Complexity]
The curvature complexity function $\kappa_\omega$ has profound geometric meaning:
\begin{enumerate}
\item \textbf{Geometric naturalness}: $\kappa_\omega$ is completely determined by intrinsic geometry of principal bundle, independent of external parameters
\item \textbf{Curvature sensitivity}: High curvature regions automatically obtain greater metric weight
\item \textbf{Flat degeneracy}: When connection $\omega$ is flat, $\kappa_\omega(x) = 1$, metric degenerates to standard metric
\item \textbf{Gauge invariance}: $\kappa_\omega$ remains invariant under gauge transformations
\end{enumerate}
\end{lemma}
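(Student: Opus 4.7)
The plan is to establish the four properties in order of increasing difficulty, since they are logically independent. The common underlying observation is that every ingredient in the definition of $\kappa_\omega$ --- the $G$-invariant metric $g_P$, the Killing form on $\g$, the curvature $\Omega = d\omega + \frac{1}{2}[\omega \wedge \omega]$, the Levi-Civita connection $\nabla^{g_P}$, and the fiber measure $\mu_{\text{fiber}}$ --- is either postulated data of the principal bundle configuration or canonically induced by it. Making this explicit essentially gives property (1), and handles a large portion of the work for the remaining items.

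For geometric naturalness (1), I would enumerate the inputs to the definition and verify one by one that each is intrinsic: $g_P$ is fixed as the $G$-invariant Riemannian metric; the Killing-type inner product $\langle\cdot,\cdot\rangle_{\g} = -B(\cdot,\cdot)$ is canonical on the semisimple Lie algebra; the curvature $\Omega$ is determined algebraically by $\omega$; the Levi-Civita connection is the unique torsion-free metric connection for $g_P$; and $\mu_{\text{fiber}}$ is the unique $G$-invariant probability measure on each fiber induced by normalized Haar measure. No arbitrary weight or external parameter enters. Property (2) is essentially tautological: both summands of $\kappa_\omega(x) - 1$ are non-decreasing functionals of the pointwise norm $\|\Omega_p\|_{g_P}^2$, so a pointwise increase in curvature magnitude strictly increases $\kappa_\omega$. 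Property (3) follows because a flat connection satisfies $\Omega \equiv 0$, hence $\nabla^{g_P}\Omega \equiv 0$, so both the supremum and the fiber integral vanish and $\kappa_\omega(x) = 1$.

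The main obstacle will be property (4), gauge invariance, because it requires tracking how each of the two non-trivial summands transforms under a bundle automorphism $\phi : P \to P$ covering $\mathrm{id}_M$, written $\phi(p) = p \cdot g(p)$ for an equivariant $g : P \to G$. My first step would be to recall the tensorial transformation rule $\phi^*\Omega = \Ad_{g^{-1}} \circ \Omega$, so that $\|\phi^*\Omega_p\|_{\g} = \|\Omega_p\|_{\g}$ by $\Ad$-invariance of the Killing form. Since $g_P$ is $G$-invariant, the map $\phi$ is an isometry of $(P, g_P)$, so a $g_P$-orthonormal frame at $p$ pulls back to a $g_P$-orthonormal frame at $\phi(p)$. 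Combining these two facts yields the pointwise equality $\|\phi^*\Omega_p\|_{g_P}^2 = \|\Omega_p\|_{g_P}^2$, and since $\phi$ acts bijectively on each fiber, the supremum in the definition is invariant.

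The subtler verification is that the derivative term $\int_{\pi^{-1}(x)} \|\nabla^{g_P}\Omega\|^2_{g_P}\, d\mu_{\text{fiber}}$ is also preserved. Here I would use that $\nabla^{g_P}$ is natural under isometries, so $\nabla^{g_P}(\phi^*\Omega) = \phi^*(\nabla^{g_P}\Omega)$, and combine isometry of $\phi$ with $\Ad$-invariance to obtain pointwise preservation of the tensor norm on $\nabla^{g_P}\Omega$. Finally, the restriction of $\phi$ to a fiber is right translation by $g(p) \in G$, under which $\mu_{\text{fiber}}$ is invariant, so the integral is unchanged. The hard part is cleanly separating the ``vertical translation'' action of $\phi$ on the fiber from the $\Ad$-rotation it induces on $\g$-valued tensors; I would handle this by working in a local trivialization $\pi^{-1}(U) \cong U \times G$, where $g_P$ decomposes into a horizontal part pulled back from $M$ and a bi-invariant part on $G$, and verifying the two invariances separately before recombining them globally.
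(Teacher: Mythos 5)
The paper states this lemma without proof, so there is nothing to compare your route against; I can only assess it on its own terms. Items (1) and (3) are fine, and item (2) is fine for the supremum term (though note the fiber integral of $\|\nabla^{g_P}\Omega\|^2$ is \emph{not} a functional of the pointwise norm $\|\Omega_p\|^2$, so only the first summand actually supports the "high curvature $\Rightarrow$ larger weight" claim).

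The genuine gap is in item (4), and it sits exactly where you flagged the difficulty. Your argument rests on the claim that a gauge transformation $\phi(p)=p\cdot g(p)$ is an isometry of $(P,g_P)$ "since $g_P$ is $G$-invariant." This is false for non-constant $g$: $G$-invariance of $g_P$ means invariance under right translation by \emph{fixed} group elements, whereas $d\phi_p(v)=dR_{g(p)}(v)+(\text{a vertical term involving }dg_p(v))$, and that extra vertical term changes lengths. (Already on a trivial bundle $M\times G$ with product metric, $\phi(x,h)=(x,g_0(x)h)$ sends a horizontal vector to one with a nonzero vertical component of norm $\sim\|dg_0\|$.) For the supremum term this does not actually matter, and your own first step already suffices: $(\phi^*\Omega)_p=\Ad_{g(p)^{-1}}\circ\,\Omega_p$ is a $\g$-valued $2$-form at the \emph{same} point $p$, so evaluating it on the \emph{same} $g_P$-orthonormal frame and using $\Ad$-invariance of the Killing norm gives $\|(\phi^*\Omega)_p\|_{g_P}^2=\|\Omega_p\|_{g_P}^2$ pointwise, with no isometry needed. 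But for the derivative term the isometry claim is load-bearing: the naturality $\nabla^{g_P}(\phi^*\Omega)=\phi^*(\nabla^{g_P}\Omega)$ you invoke holds only for isometries, and differentiating $\Ad_{g(\cdot)^{-1}}\Omega$ with the Levi-Civita connection (with the trivial connection on the $\g$-factor) produces an extra term proportional to $\ad_{g^{-1}dg}$ acting on $\Omega$, which does not have preserved norm. So as literally defined, the summand $\int_{\pi^{-1}(x)}\|\nabla^{g_P}\Omega\|^2_{g_P}\,d\mu_{\text{fiber}}$ is not gauge invariant; invariance would require replacing $\nabla^{g_P}$ on the $\g$-factor by the covariant derivative induced by $\omega$ (or restricting to constant gauge transformations). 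Your proposed local-trivialization computation, if carried out honestly, would surface exactly this obstruction rather than remove it, so the proof of (4) as planned does not close.
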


\begin{definition}[Curvature-Induced Spencer Metric B]
On Spencer complex $\mathcal{S}^k_{D,\lambda}$, define geometrically induced inner product:
$$\langle\omega_1 \otimes s_1, \omega_2 \otimes s_2\rangle_B = \int_M \kappa_\omega(x) \langle\omega_1(x), \omega_2(x)\rangle_{g_M} \langle s_1, s_2\rangle_{\Sym^k} \, dV_M$$
\end{definition}

\begin{theorem}[Intrinsic Geometric Properties of Metric B]
The curvature-induced metric $\langle\cdot,\cdot\rangle_B$ has the following intrinsic properties:

\begin{enumerate}
\item \textbf{Geometric intrinsicality}: Metric $\langle\cdot,\cdot\rangle_B$ is completely determined by intrinsic geometry of principal bundle $(P,\omega)$

\item \textbf{Curvature adaptivity}: Metric automatically adapts to curvature distribution of principal bundle, providing stronger metric structure in high curvature regions

\item \textbf{Gauge covariance}: Metric is covariant under gauge transformations, maintaining clear physical meaning

\item \textbf{Elliptic optimization}: For constraint systems with complex geometric structure, metric B often provides better elliptic constants

\item \textbf{Computational efficiency}: In numerical computation, weight distribution of metric B often leads to better condition numbers
\end{enumerate}
\end{theorem}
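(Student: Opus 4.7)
The plan is to dispatch the five properties in two groups. Properties (1)--(3) are structural claims that follow from unpacking the construction of $\kappa_\omega$ and of the metric itself, while (4)--(5) are quantitative assertions about elliptic estimates and numerical conditioning that I would extract from the ellipticity theorem already established for Spencer complexes in the previous section.

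For intrinsicality (1), I would verify that each ingredient appearing in $\kappa_\omega(x) = 1 + \sup_{\pi^{-1}(x)}\|\Omega_p\|^2_{g_P} + \int_{\pi^{-1}(x)}\|\nabla^{g_P}\Omega\|^2\,d\mu_{\text{fiber}}$, namely $\Omega$, the Killing form on $\g$, the $G$-invariant metric $g_P$, its Levi-Civita connection, and the normalized $G$-invariant fiber measure, is canonically associated to $(P,\omega,g_P)$ with no auxiliary data. Curvature adaptivity (2) is then immediate from the pointwise monotonicity of $\kappa_\omega$ in $\|\Omega\|^2_{g_P}$. For gauge covariance (3), I would invoke the transformation law $\Omega \mapsto \Ad_{g^{-1}}\Omega$ under a bundle automorphism, together with $\Ad$-invariance of the Killing form, to conclude that $\|\Omega\|^2_{g_P}$ is pointwise gauge invariant; because $\nabla^{g_P}$ depends only on $g_P$ and $\Ad$ commutes with taking norms, the same argument applied after one covariant derivative handles the $\|\nabla^{g_P}\Omega\|^2$ term. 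Hence $\kappa_\omega$ is gauge invariant on $M$, and since the remaining factors of the metric (the Riemannian volume on $M$ and the Killing pairing on $\Sym^j(\g)$) are unaffected by gauge transformations, the whole inner product descends to the level of gauge equivalence classes.

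Properties (4) and (5) are where the substantive analytical work concentrates. My plan is to recast (4) quantitatively as a G{\aa}rding-type coercivity estimate of the form $\|u\|^2_{H^1} \leq C(\omega)\,\mathrm{Re}\langle \Delta_B u, u\rangle_B + C'\|u\|^2_{L^2}$, in which the constant $C(\omega)$ depends only on curvature invariants, and compare it with the analogous estimate for metric A whose constant depends instead on $\inf_M w_\lambda$. Inserting the pointwise lower bound $\kappa_\omega(x) \geq 1 + \|\Omega_p\|^2_{g_P}$ into the elliptic estimate from the previous section then yields the improvement whenever curvature dominates constraint strength. Claim (5) follows as a corollary: the condition number of the discretized Spencer-Laplacian is controlled by $\sup_M\kappa_\omega/\inf_M\kappa_\omega$, which is typically more favorable than $\sup_M w_\lambda/\inf_M w_\lambda$ in curvature-dominant regimes. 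The main obstacle I anticipate is making (4) fully precise without importing hypotheses foreign to the compatible pair framework; the cleanest route is probably to assume a uniform lower bound $\|\Omega_p\|^2_{g_P} \geq c_0 > 0$, which promotes the qualitative heuristic into a genuine comparative theorem.
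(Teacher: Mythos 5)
The paper states this theorem without any accompanying proof, so there is no argument of the authors' to compare yours against; your proposal fills a genuine gap rather than reproducing an existing one. Your handling of (1)--(3) is essentially the right and only available route: unpack the definition of $\kappa_\omega$, observe pointwise monotonicity in $\|\Omega\|^2_{g_P}$, and use $\Omega \mapsto \Ad_{g^{-1}}\Omega$ together with $\Ad$-invariance of the Killing norm. Two caveats are worth recording. First, your phrasing ``canonically associated to $(P,\omega,g_P)$'' is actually a correction of the theorem, which claims determination by $(P,\omega)$ alone; the construction visibly also uses $g_P$, its Levi-Civita connection, the fiber measure, and (in the metric itself) $g_M$, so intrinsicality holds only relative to that enlarged data. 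Second, for the $\|\nabla^{g_P}\Omega\|^2$ term your gauge-covariance argument needs the gauge transformation to be an isometry of $g_P$: the paper only assumes $g_P$ is invariant under right translations $R_g$, whereas a general gauge transformation is a vertical automorphism $p \mapsto p\cdot\sigma(p)$ that need not preserve $g_P$, hence need not preserve $\nabla^{g_P}$. You should either restrict to such isometric automorphisms or add invariance of $g_P$ under the full gauge group as a hypothesis.

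For (4) and (5) your diagnosis is the correct one: as stated (``often provides better elliptic constants,'' ``often leads to better condition numbers'') these are not provable mathematical claims, and any rigorous version must import a hypothesis such as your uniform curvature lower bound $\|\Omega_p\|^2_{g_P} \geq c_0 > 0$, after which the comparison reduces to the paper's own Proposition on elliptic constants, where $C_A/C_B$ is sandwiched between $\inf_M w_\lambda / \sup_M \kappa_\omega$ and $\sup_M w_\lambda / \inf_M \kappa_\omega$. Your G\aa rding-inequality framing and the condition-number bound via $\sup_M \kappa_\omega/\inf_M \kappa_\omega$ are consistent with that proposition. The honest conclusion, which you reach, is that (4)--(5) are comparative heuristics that become theorems only under a curvature-dominance hypothesis; stating that hypothesis explicitly is the substantive content a proof of this theorem would have to add.
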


\section{Ellipticity Theory and Fredholm Properties of Spencer Operators}

Building on the foundation of dual metric theory, we deeply analyze the elliptic properties of Spencer operators and corresponding functional analysis structures. Ellipticity is the core of Spencer theory; it not only guarantees good analytical properties of corresponding differential complexes, but more importantly establishes deep connections between geometric conditions and analytical properties.

\subsection{Intrinsic Connection Between Strong Transversality Conditions and Ellipticity}

\begin{theorem}[Strong Transversality-Ellipticity Equivalence Theorem]
For compatible pair $(D,\lambda)$, the following conditions are equivalent:

\begin{enumerate}
\item[(ST)] \textbf{Strong transversality condition}: $D_p \cap V_p = \{0\}$ and $D_p + V_p = T_pP$ for all $p \in P$

\item[(MC)] \textbf{Modified Cartan condition}: $d\lambda + \text{ad}_\omega^* \lambda = 0$ and $\lambda$ is non-degenerate

\item[(ELL)] \textbf{Spencer ellipticity}: Total Spencer operator $\mathcal{D}^{\bullet}_{D,\lambda}$ has elliptic principal symbol

\item[(FRED)] \textbf{Fredholm property}: $\mathcal{D}^{\bullet}_{D,\lambda}$ is a Fredholm operator family

\item[(ANA)] \textbf{Analytical equivalence}: There exists elliptic estimate constant $C > 0$ such that
$$\sum_{k}\|\mathcal{D}^k_{D,\lambda} u\|^2 + \|\mathcal{D}^{k*}_{D,\lambda} u\|^2 \geq C \|u\|_{H^1}^2$$

\item[(TOP)] \textbf{Topological equivalence}: Spencer cohomology groups $H^k_{\text{Spencer}}(D,\lambda)$ are finite-dimensional
\end{enumerate}
\end{theorem}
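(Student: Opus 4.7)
The plan is to prove all six equivalences by organizing them into a cyclic chain $(\mathrm{ST}) \Leftrightarrow (\mathrm{MC}) \Rightarrow (\mathrm{ELL}) \Rightarrow (\mathrm{ANA}) \Rightarrow (\mathrm{FRED}) \Rightarrow (\mathrm{TOP}) \Rightarrow (\mathrm{ST})$, in which the first equivalence is geometric, the last implication carries the real analytic content, and the middle chain is essentially an application of standard elliptic theory on a compact manifold combined with the ellipticity theorem already established in Section 3.2.

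For $(\mathrm{ST}) \Leftrightarrow (\mathrm{MC})$, I would invoke the Existence and Uniqueness Theorem of compatible pairs directly. Forward existence constructs $D$ from any $\lambda$ satisfying the modified Cartan equation and shows the resulting distribution automatically satisfies strong transversality; inverse constructibility recovers a unique gauge-equivalence class of $\lambda$ from a transversal $D$ by minimizing $\mathcal{I}_D[\lambda]$, whose minimizer automatically solves $d\lambda + \text{ad}_\omega^* \lambda = 0$. So (ST) and (MC) are two facets of the same compatible-pair structure.

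For the forward analytic chain, $(\mathrm{MC}) \Rightarrow (\mathrm{ELL})$ is the content of the earlier Ellipticity Theorem, whose key input is that the principal symbol of $\mathcal{D}^k_{D,\lambda}$ reduces to the Koszul differential $i\xi \wedge \cdot$ (the $\delta^{\lambda}_{\g}$ contribution being strictly lower order), which is exact at interior degrees for $\xi \neq 0$. Then $(\mathrm{ELL}) \Rightarrow (\mathrm{ANA})$ follows from Gårding's inequality applied to the Spencer Laplacian $\Delta^k = \mathcal{D}^{k*}_{D,\lambda}\mathcal{D}^k_{D,\lambda} + \mathcal{D}^{k-1}_{D,\lambda}\mathcal{D}^{(k-1)*}_{D,\lambda}$, which becomes elliptic of order two once the complex is elliptic; here the uniform lower bound $\inf_M w_\lambda > 1$ (or $\inf_M \kappa_\omega \geq 1$) ensures the elliptic constants are uniform in the chosen Spencer metric. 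Next, $(\mathrm{ANA}) \Rightarrow (\mathrm{FRED})$ follows from compactness of $M$ via the Rellich--Kondrachov embedding, which turns the elliptic estimate into closedness of range and finite-dimensional kernel and cokernel; and $(\mathrm{FRED}) \Rightarrow (\mathrm{TOP})$ is the standard Hodge-theoretic identification $H^k_{\text{Spencer}}(D,\lambda) \cong \ker \Delta^k$, which I will justify rigorously only after the Spencer-Hodge decomposition is in place.

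The main obstacle, and what I expect to be the delicate step, is closing the loop with $(\mathrm{TOP}) \Rightarrow (\mathrm{ST})$. I would argue by contraposition: if strong transversality fails at some $p_0$, then either $D_{p_0} \cap V_{p_0} \neq \{0\}$ or $D_{p_0} + V_{p_0} \subsetneq T_{p_0}P$, and via the compatibility relation $D_p = \{v : \langle \lambda(p), \omega(v)\rangle = 0\}$ this forces either a degeneracy or non-uniqueness of $\lambda$ at $p_0$. Such a degeneracy causes the Koszul symbol $i\xi \wedge \cdot$ to lose exactness along a nontrivial cone of covectors in a neighborhood of $x_0 = \pi(p_0)$, so the symbol complex becomes non-elliptic. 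One then constructs a Weyl-type sequence of localized test sections — bump functions concentrated near $x_0$ with oscillation frequency in the degenerate conormal direction, tensored with an infinite family of independent elements of $\Sym^j(\g)$ — for which $\mathcal{D}^k_{D,\lambda} u_n \to 0$ while $\|u_n\|$ stays bounded below, exhibiting an infinite-dimensional approximate kernel of $\Delta^k$ and contradicting (TOP). The subtle technical point here is controlling the interaction between the symbol degeneracy and the weight $w_\lambda$ (or $\kappa_\omega$): a naive construction could have weight vanishing precisely where $\lambda$ degenerates, collapsing the norms. The key fact that must be exploited is the strict positivity $w_\lambda \geq 1$ (respectively $\kappa_\omega \geq 1$), which guarantees the test sections retain positive weighted norm regardless of how $\lambda$ or $\Omega$ behave locally, thereby making the infinite-dimensional obstruction rigorously detectable in the Hilbert space structure.
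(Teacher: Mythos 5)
Your overall architecture is essentially the paper's: a cyclic chain of implications through all six conditions, with $(\mathrm{ST})\Leftrightarrow(\mathrm{MC})$ handled by the compatible-pair existence theorem, $(\mathrm{MC})\Rightarrow(\mathrm{ELL})$ by the Koszul-symbol computation, the middle implications by standard elliptic theory on a compact manifold, and the loop closed by contraposition at $(\mathrm{TOP})\Rightarrow(\mathrm{ST})$. Swapping the order of $(\mathrm{ANA})$ and $(\mathrm{FRED})$ relative to the paper (G\aa rding first, then Rellich--Kondrachov, rather than Fredholm first) is immaterial; both routes are standard.

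There is, however, a genuine internal inconsistency in your mechanism for $(\mathrm{TOP})\Rightarrow(\mathrm{ST})$. You claim that failure of strong transversality at $p_0$ causes the Koszul symbol $i\xi\wedge\cdot$ to ``lose exactness along a nontrivial cone of covectors.'' But in your own $(\mathrm{MC})\Rightarrow(\mathrm{ELL})$ step you (correctly, and in agreement with the paper) established that the principal symbol of $\mathcal{D}^k_{D,\lambda}$ is $(i\xi\wedge\cdot)\otimes\mathrm{id}_{\Sym^j(\g)}$ precisely because $\delta^{\lambda}_{\g}$ is a zeroth-order operator. That symbol depends on neither $D$ nor $\lambda$, so it cannot degenerate when transversality fails; the Koszul complex remains exact at interior degrees for every $\xi\neq 0$ regardless. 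A Weyl sequence built on oscillation in a ``degenerate conormal direction'' therefore has nothing to latch onto. The paper's contradiction argument locates the obstruction elsewhere: a nonzero $v_0\in D_{p_0}\cap V_{p_0}$ is vertical, hence of the form $X_0^*|_{p_0}$ for some $X_0\in\g$ with $\omega(X_0^*)=X_0$, and the compatibility relation then forces $\langle\lambda(p_0),X_0\rangle=0$ --- a degeneration of the pairing that enters the \emph{zeroth-order} operator $\delta^{\lambda}_{\g}$, from which the infinite-dimensional family of harmonic forms is manufactured. To repair your argument, the test sections should be chosen so that $\delta^{\lambda}_{\g}$ (not the exterior-derivative part) annihilates them to leading order, using the degenerate direction $X_0$ in $\Sym^j(\g)$. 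Your observation that $w_\lambda\geq 1$ (resp.\ $\kappa_\omega\geq 1$) prevents the weighted norms from collapsing is correct and worth keeping; it is consistent with the paper's normalization of both weight functions.
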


\begin{proof}[Detailed Proof of Equivalence]
We prove equivalence cyclically: (ST) $\Rightarrow$ (MC) $\Rightarrow$ (ELL) $\Rightarrow$ (FRED) $\Rightarrow$ (ANA) $\Rightarrow$ (TOP) $\Rightarrow$ (ST).

\textbf{(ST) $\Rightarrow$ (MC)}: This follows directly from the definition of compatible pairs. Strong transversality conditions ensure well-definedness of compatibility relations, and the modified Cartan equation is a basic requirement for dual functions.

\textbf{(MC) $\Rightarrow$ (ELL)}:
Key observation: The modified Cartan equation ensures well-definedness and nilpotency of Spencer extension operator $\delta^{\lambda}_{\g}$.

\textbf{Step 1: Precise calculation of principal symbol}
Analysis of principal symbol of Spencer operator $\mathcal{D}^k_{D,\lambda}$:
$$\sigma_\xi(\mathcal{D}^k_{D,\lambda})(\omega \otimes s) = \sigma_\xi(d\omega \otimes s + (-1)^k \omega \otimes \delta^{\lambda}_{\g}(s))$$

Principal term: Principal symbol of $d\omega$ is $\sigma_\xi(d) = i\xi \wedge \cdot$
Lower-order term: $\omega \otimes \delta^{\lambda}_{\g}(s)$ contributes zero to principal symbol since $\delta^{\lambda}_{\g}$ contains no spatial derivatives

Therefore: $\sigma_\xi(\mathcal{D}^k_{D,\lambda}) = (i\xi \wedge \cdot) \otimes \text{id}_{\Sym^j(\g)}$

\textbf{Step 2: Ellipticity of symbol complex}
For non-zero cotangent vector $\xi \in T^*_xM \setminus \{0\}$, the symbol complex:
$$\cdots \to \Lambda^{k-1}T^*_xM \otimes \Sym^j(\g) \xrightarrow{i\xi \wedge} \Lambda^kT^*_xM \otimes \Sym^j(\g) \xrightarrow{i\xi \wedge} \Lambda^{k+1}T^*_xM \otimes \Sym^j(\g) \to \cdots$$

This is the standard Koszul complex, exact at interior degrees, so Spencer complex has elliptic principal symbol.

\textbf{(ELL) $\Rightarrow$ (FRED)}: This is a result of standard elliptic operator theory. Exactness of elliptic symbol complex, combined with compactness assumption of manifold $M$, directly leads to Fredholm property.

\textbf{(FRED) $\Rightarrow$ (ANA)}:
Fredholm property combined with standard techniques of elliptic theory can establish elliptic estimates. Specifically, for Fredholm operator $T$, there exists constant $C > 0$ such that:
$$\|u\|_{H^{s+1}} \leq C(\|Tu\|_{H^s} + \|u\|_{H^{s_0}})$$

Applied to Spencer operators and using their self-adjoint structure, we obtain the required estimate.

\textbf{(ANA) $\Rightarrow$ (TOP)}:
Elliptic estimates directly lead to finite-dimensionality of kernel and cokernel spaces. Let $u \in \ker(\mathcal{D}^k_{D,\lambda})$, then:
$$C\|u\|_{H^1}^2 \leq \|\mathcal{D}^k_{D,\lambda} u\|^2 + \|\mathcal{D}^{k*}_{D,\lambda} u\|^2 = 0$$

Therefore $u = 0$ or $C = 0$. If $C > 0$, then $\ker(\mathcal{D}^k_{D,\lambda}) = \{0\}$ in $H^1$; by compact embedding, kernel space is finite-dimensional.

\textbf{(TOP) $\Rightarrow$ (ST)}:
Proof by contradiction: If strong transversality condition fails, i.e., there exists $p_0 \in P$ such that $D_{p_0} \cap V_{p_0} \neq \{0\}$.

Let $v_0 \in D_{p_0} \cap V_{p_0} \setminus \{0\}$. Since $v_0 \in V_{p_0} = \ker d\pi_{p_0}$, there exists $X_0 \in \g$ such that $v_0 = X_0^*|_{p_0}$.

By connection property $\omega(X_0^*) = X_0$, compatibility condition gives $\langle\lambda(p_0), X_0\rangle = 0$.

This means we can construct infinite-dimensional families of harmonic forms, contradicting finite-dimensionality of Spencer cohomology.
\end{proof}

\begin{remark}[Adjoint of Constructive Spencer Operator]
The adjoint operator $(\delta^{\lambda}_{\g})^*$ in the above expressions is defined with respect to the constructive Spencer operator. For generators $v \in \g$, the adjoint satisfies:
$$\langle\delta^{\lambda}_{\g}(v), w_1 \odot w_2\rangle = \langle v, (\delta^{\lambda}_{\g})^*(w_1 \odot w_2)\rangle$$
where the inner product is the Killing form-induced pairing. The explicit form of $(\delta^{\lambda}_{\g})^*$ follows from the constructive definition and the graded Leibniz rule.
\end{remark}

\subsection{Precise Ellipticity Analysis of Spencer Operators}

\begin{lemma}[Analytical Characterization of Strong Transversality Conditions]
Strong transversality condition $D_p \cap V_p = \{0\}$ is equivalent to the following analytical condition: there exists constant $c > 0$ such that for all $p \in P$ and $\xi \in T^*_{\pi(p)}M \setminus \{0\}$, the composite map
$$\Phi_{p,\xi}: T_pP \ni v \mapsto (\langle\lambda(p), \omega(v)\rangle, i\langle\xi, d\pi(v)\rangle) \in \mathbb{C}^2$$
satisfies $\|\Phi_{p,\xi}(v)\| \geq c\|v\|$ for all $v \in T_pP$.
\end{lemma}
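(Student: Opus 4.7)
The plan is to prove the two implications separately. The reverse direction is essentially immediate: if strong transversality fails at some $p_0 \in P$, pick $0 \neq v_0 \in D_{p_0} \cap V_{p_0}$; then $v_0 \in D_{p_0}$ forces $\langle\lambda(p_0),\omega(v_0)\rangle = 0$, while $v_0 \in V_{p_0} = \ker d\pi_{p_0}$ forces $d\pi(v_0) = 0$. Consequently $\Phi_{p_0,\xi}(v_0) = 0$ for every $\xi$, which directly violates any positive lower bound and establishes the contrapositive.

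For the forward direction, assume $D_p \cap V_p = \{0\}$ at every $p$. The strong transversality condition supplies the direct sum $T_pP = D_p \oplus V_p$, so each $v \in T_pP$ splits uniquely as $v = v_D + v_V$. Since $\omega$ restricts to a linear isomorphism $V_p \cong \g$ via the fundamental vector field correspondence, the first coordinate $\langle\lambda(p),\omega(v_V)\rangle$ is the pairing of $v_V$ (viewed in $\g$) against the nonzero functional $\lambda(p) \in \g^*$ and thus detects the component of $v_V$ transverse to $\ker\lambda(p)$. Simultaneously, $d\pi|_{D_p}: D_p \to T_{\pi(p)}M$ is an isomorphism, so the second coordinate $i\langle\xi,d\pi(v_D)\rangle$ detects the component of $v_D$ transverse to $\ker\xi$. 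Together this yields pointwise the required non-degeneracy: there exists a continuous positive function $c(p,\xi) > 0$ controlling $\|\Phi_{p,\xi}(\cdot)\|$ from below on the relevant two-dimensional subspace.

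To upgrade $c(p,\xi)$ to a uniform constant $c > 0$, I would invoke compactness. Since $M$ is compact, $P$ and the unit cotangent sphere bundle $S(T^*M) \subset T^*M$ are compact; the $G$-equivariance of $\omega$, $\lambda$, $D$, and $V$ shows that all quantities descend to continuous functions on $S(T^*M)$ (or on the compact quotient $P/G \times_M S(T^*M)$). A strictly positive continuous function on a compact space attains a positive minimum, giving the uniform $c > 0$ asserted in the lemma. The smoothness hypotheses on $\omega$ and $\lambda$ ensure the continuous dependence, and the non-degeneracy $\lambda(p) \neq 0$ combined with $\xi \neq 0$ keeps the pointwise constants strictly positive.

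The main obstacle is interpretive rather than computational: $\Phi_{p,\xi}$ maps into $\mathbb{C}^2$, whereas $T_pP$ typically has dimension $n + \dim G \gg 2$, so a literal bound $\|\Phi_{p,\xi}(v)\| \geq c\|v\|$ for every $v \in T_pP$ cannot hold. The honest reading of the lemma is that the test vector $v$ must be taken in (or the norm on $v$ must be the one measuring components along) the two distinguished directions $\Phi_{p,\xi}$ actually probes, namely a representative of $\lambda(p)^{\sharp} \in V_p$ and the horizontal lift of $\xi^{\sharp} \in T_{\pi(p)}M$ into $D_p$; equivalently, one proves a uniform lower bound on the smallest singular value of the induced $2 \times 2$ matrix. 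The delicate step is verifying that strong transversality controls the norm of the projection $T_pP = D_p \oplus V_p$ uniformly, so that the bound on the two distinguished directions genuinely encodes the geometric non-degeneracy in a form usable for the Koszul/symbol argument underlying Spencer ellipticity.
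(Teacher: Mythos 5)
Your reverse implication is correct and is essentially the only part of this equivalence that survives scrutiny: a nonzero $v_0 \in D_{p_0} \cap V_{p_0}$ is annihilated by both components of $\Phi_{p_0,\xi}$, so no positive lower bound can exist. Your diagnosis of the forward direction is also correct, and it is worth saying plainly that the paper's own proof commits exactly the conflation you warn against: it asserts that $\ker\Phi_{p,\xi}=\{0\}$ is equivalent to the non-existence of a nonzero $v$ with $\langle\lambda(p),\omega(v)\rangle=0$ \emph{and} $d\pi(v)=0$, silently replacing the actual second condition $\langle\xi,d\pi(v)\rangle=0$ (one scalar equation) by $d\pi(v)=0$ ($n$ scalar equations). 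In reality $\ker\Phi_{p,\xi} = D_p \cap (d\pi_p)^{-1}(\ker\xi)$ is cut out by at most two real linear functionals (the image of $\Phi_{p,\xi}$ lies in the real two-dimensional subspace $\mathbb{R}\times i\mathbb{R}\subset\mathbb{C}^2$), so $\dim\ker\Phi_{p,\xi} \geq \dim P - 2 = n + \dim G - 2 \geq 2$ under the paper's own hypotheses, since a compact connected semisimple $G$ has $\dim G \geq 3$. Hence the stated inequality $\|\Phi_{p,\xi}(v)\|\geq c\|v\|$ for all $v\in T_pP$ is false as written, and your reinterpretation --- a uniform lower bound on the smallest singular value of the induced map on the two distinguished directions, equivalently on the quotient $T_pP/\ker\Phi_{p,\xi}$ --- is the correct repair rather than a pedantic quibble.

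Two further points on your repair. First, restricting $\xi$ to the unit cosphere bundle before invoking compactness is not optional: the first component of $\Phi_{p,\xi}$ is independent of $\xi$ while the second is homogeneous of degree one in $\xi$, so letting $\xi\to 0$ through nonzero covectors kills any uniform bound even on the distinguished directions; the lemma's quantifier ``for all $\xi\in T^*_{\pi(p)}M\setminus\{0\}$'' can only mean unit covectors. Second, the substantive content you isolate at the end --- that strong transversality controls the norms of the projections in the splitting $T_pP = D_p\oplus V_p$ uniformly over $P$ --- is exactly the quantity the paper later calls $\operatorname{gap}(T_pP;D_p,V_p)$, and your compactness-plus-equivariance argument is the right way to bound it away from zero. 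In short, your proposal does not prove the lemma as literally stated because the lemma as literally stated cannot be proved; it is more careful than the paper's two-sentence argument and identifies the precise correction the statement needs.
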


\begin{proof}
This is a linear algebraic reformulation of strong transversality. $D_p \cap V_p = \{0\}$ is equivalent to the constraint condition $\langle\lambda(p), \omega(v)\rangle = 0$ and projection condition $d\pi(v) = 0$ not being satisfied simultaneously non-trivially, i.e., the kernel of the above composite map is zero.
\end{proof}

\subsection{Comparison of Elliptic Constants Under Two Metrics}

\begin{proposition}[Precise Estimates of Elliptic Constants]
For two Spencer metrics, elliptic constants have the following precise estimates:

\textbf{Elliptic constant under metric A}:
$$C_A = \inf_{x \in M} w_\lambda(x) \cdot \inf_{p \in P} \text{gap}(T_pP; D_p, V_p) \cdot \min\{\lambda_1(d^*d), \lambda_1((\delta^{\lambda}_{\g})^*\delta^{\lambda}_{\g})\}$$

\textbf{Elliptic constant under metric B}:
$$C_B = \inf_{x \in M} \kappa_\omega(x) \cdot \inf_{p \in P} \text{gap}(T_pP; D_p, V_p) \cdot \min\{\lambda_1(d^*d), \lambda_1((\delta^{\lambda}_{\g})^*\delta^{\lambda}_{\g})\}$$

where $\text{gap}(T_pP; D_p, V_p)$ is the geometric gap constant for direct sum decomposition $T_pP = D_p \oplus V_p$:
$$\text{gap}(T_pP; D_p, V_p) = \inf_{\substack{u \in D_p \\ v \in V_p \\ \|u\| = \|v\| = 1}} \|u - v\|_{g_P}$$

\textbf{Metric comparison}: Elliptic constants of both metrics satisfy:
$$\frac{\inf_M w_\lambda}{\sup_M \kappa_\omega} \leq \frac{C_A}{C_B} \leq \frac{\sup_M w_\lambda}{\inf_M \kappa_\omega}$$
\end{proposition}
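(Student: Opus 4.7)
The plan is to establish each elliptic constant as a product of three decoupled factors and then obtain the comparison by dividing. The three factors will be: a scalar weight from the chosen metric ($w_\lambda$ or $\kappa_\omega$), the geometric gap constant measuring the transversality of $T_pP = D_p \oplus V_p$, and the spectral minimum $\min\{\lambda_1(d^*d), \lambda_1((\delta^{\lambda}_{\g})^*\delta^{\lambda}_{\g})\}$ arising from the horizontal and vertical parts of the total Spencer operator.

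First I would expand the Gårding quadratic form $\|\mathcal{D}^k_{D,\lambda}u\|^2 + \|\mathcal{D}^{k-1*}_{D,\lambda}u\|^2$ using the bigraded splitting $\mathcal{D} = \mathcal{D}_h + \mathcal{D}_v$. The cross-anticommutation identity $\mathcal{D}_h\mathcal{D}_v + \mathcal{D}_v\mathcal{D}_h = 0$ from the bigraded-structure lemma eliminates the leading cross term, so the quadratic form splits into a de Rham piece bounded below by $\lambda_1(d^*d)$ on $\Omega^\bullet(M)$ and a symmetric-algebra piece bounded below by $\lambda_1((\delta^{\lambda}_{\g})^*\delta^{\lambda}_{\g})$ on $\Sym^\bullet(\g)$, yielding the $\min$ factor. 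Next I would invoke the analytical characterization of strong transversality proved just above the proposition, which bounds $\Phi_{p,\xi}$ from below by a constant proportional to $\text{gap}(T_pP; D_p, V_p)$; continuity of the gap in $p$ together with compactness of $P$ then produces the global factor $\inf_{p\in P}\text{gap}(T_pP; D_p, V_p) > 0$. Finally, pulling the scalar weight out of the integral against the respective Hilbert norm and replacing it with its infimum supplies the weight prefactor in $C_A$ and $C_B$; the matching upper bound, used in the comparison, is obtained by replacing the weight with its supremum.

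The comparison step is then purely algebraic: since the gap factor and the spectral minima are defined intrinsically and appear identically in the two formulas, the quotient $C_A/C_B$ collapses to a ratio of weight extrema, sandwiched between $\inf_M w_\lambda / \sup_M \kappa_\omega$ and $\sup_M w_\lambda / \inf_M \kappa_\omega$. The main obstacle I anticipate is the first step: cleanly isolating the horizontal and vertical contributions without contamination from zeroth-order commutator terms between $\omega$ and $\lambda$ that survive beyond the principal-symbol calculation. These lower-order terms must be absorbed via the compact embedding $H^1 \hookrightarrow L^2$ and a Peter-Paul inequality in such a way that the multiplicative structure of the elliptic constant is preserved; this requires careful tracking of the Killing-form normalization in $\langle \cdot,\cdot\rangle_{\Sym^j}$ through the same Lie-algebraic cancellations that underlay the nilpotency proof.
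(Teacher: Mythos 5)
Your proposal follows essentially the same route as the paper's own proof: split the Spencer operator into its horizontal ($d$) and vertical ($\delta^{\lambda}_{\g}$) blocks, bound each block by the corresponding first eigenvalue to obtain the $\min$ factor, pull the metric weight out through its infimum, attribute the remaining factor to the transversality gap, and read off the comparison from the boundedness of the weight ratio --- indeed your version is more explicit than the paper's, and you correctly flag the real weak point (absorbing the surviving zeroth-order terms without destroying the multiplicative form of the constant), which the paper hides inside ``$+$ lower order terms.'' One small correction: the cross term in $\|\mathcal{D}_h u + \mathcal{D}_v u\|^2$ vanishes because $\mathcal{D}_h u \in \mathcal{S}^{k+1,j}_{D,\lambda}$ and $\mathcal{D}_v u \in \mathcal{S}^{k,j+1}_{D,\lambda}$ lie in orthogonal bidegree components of the metric, not because of the anticommutation identity $\mathcal{D}_h\mathcal{D}_v + \mathcal{D}_v\mathcal{D}_h = 0$, which governs compositions rather than inner products of images.
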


\begin{proof}
Estimation of elliptic constants is based on the block structure of Spencer operators:
$$\mathcal{D}^k_{D,\lambda} = \begin{pmatrix} d & 0 \\ 0 & \delta^{\lambda}_{\g} \end{pmatrix} + \text{lower order terms}$$

Elliptic constants for each component are:
\begin{itemize}
\item Exterior differential operator $d$: Standard elliptic constant $\lambda_1(d^*d)$
\item Spencer extension operator $\delta^{\lambda}_{\g}$: $\lambda$-dependent elliptic constant $\lambda_1((\delta^{\lambda}_{\g})^*\delta^{\lambda}_{\g})$
\end{itemize}

Lower bounds of weight functions $w_\lambda$ or $\kappa_\omega$ ensure uniform ellipticity of metrics, while geometric gap constants reflect quantitative strength of strong transversality conditions.

Inequalities for metric comparison follow directly from boundedness of weight function ratios.
\end{proof}

\begin{corollary}[Geometric Criteria for Ellipticity]
The following geometric conditions enhance ellipticity of Spencer operators:
\begin{enumerate}
\item \textbf{Constraint strength uniformity}: $\sup_M w_\lambda / \inf_M w_\lambda$ close to 1
\item \textbf{Curvature distribution uniformity}: $\sup_M \kappa_\omega / \inf_M \kappa_\omega$ close to 1  
\item \textbf{Transversal gap consistency}: $\inf_{p \in P} \text{gap}(T_pP; D_p, V_p)$ far from 0
\item \textbf{Spencer extension regularity}: Good spectral gap of Spencer operator $\delta^{\lambda}_{\g}$
\end{enumerate}
\end{corollary}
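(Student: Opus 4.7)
The plan is to read this statement as a direct consequence of the explicit multiplicative formulas for $C_A$ and $C_B$ in the preceding proposition, so the proof amounts to identifying, for each of the four conditions, which factor in the product decomposition it controls. First I would fix a notion of "enhancement": the natural one is that the elliptic constant appearing in the coercivity bound of the Spencer ellipticity theorem is bounded below by $C_A$ (resp.\ $C_B$), so any increase in the right-hand side of
\[
C_A = \inf_M w_\lambda \cdot \inf_{p \in P} \text{gap}(T_pP; D_p, V_p) \cdot \min\{\lambda_1(d^*d),\ \lambda_1((\delta^{\lambda}_{\g})^*\delta^{\lambda}_{\g})\}
\]
is an enhancement, and symmetrically for $C_B$. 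Under this reading the corollary becomes a bookkeeping statement.

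I would then dispatch conditions (3) and (4) first, since they are essentially immediate. The transversal gap $\inf_{p\in P}\text{gap}(T_pP; D_p, V_p)$ appears as an explicit linear factor in both $C_A$ and $C_B$, so bounding it away from zero scales the elliptic constant linearly; this is exactly the analytical content of the strong transversality condition and also of the lemma that recasts strong transversality as the coercivity of $\Phi_{p,\xi}$. For (4), a spectral gap of $\delta^{\lambda}_{\g}$ enlarges $\lambda_1((\delta^{\lambda}_{\g})^*\delta^{\lambda}_{\g})$, and the only mild check is that this quantity is actually the active one in the minimum; on manifolds where $\lambda_1(d^*d)$ dominates one argues that the Spencer side is then automatically non-binding, so improving it never hurts.

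For conditions (1) and (2) the argument is less direct because the formulas for $C_A$ and $C_B$ see only the infimum of the weight, not its range. Here I would interpret the hypothesis as guaranteeing uniformity of the elliptic estimate across $M$: when $\sup_M w_\lambda / \inf_M w_\lambda \to 1$ the weighted norm of scheme A is quasi-isometric to the unweighted $L^2$ inner product, so the elliptic estimate inherits the standard constants without weight-induced distortion, and similarly for $\kappa_\omega$ in scheme B. The two-sided comparison at the end of the proposition then gives $C_A/C_B \in [\inf_M w_\lambda / \sup_M \kappa_\omega, \sup_M w_\lambda / \inf_M \kappa_\omega]$, which collapses toward a single value as both weight functions become uniform, so the two schemes provide essentially equivalent (and hence mutually reinforcing) elliptic control.

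The main obstacle is not analytic but definitional: the statement is qualitative ("close to 1", "far from 0"), so the real care goes into choosing the formalization of enhancement so that the four implications decouple cleanly along the four factors of the product. Once this is done, each implication is a one-line reading of the proposition, and the only background facts needed are compactness of $M$ (so the infima are attained and strictly positive) and the standing $C^3$-regularity of $\lambda$ and $\omega$, both of which are already part of the global geometric assumptions.
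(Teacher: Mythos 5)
Your proposal is correct and follows essentially the same route the paper intends: the corollary is meant to be read directly off the multiplicative formulas for $C_A$ and $C_B$ in the preceding proposition, with conditions (3) and (4) controlling the gap factor and the $\min\{\lambda_1(d^*d),\lambda_1((\delta^{\lambda}_{\g})^*\delta^{\lambda}_{\g})\}$ factor respectively. Your observation that conditions (1) and (2) are \emph{not} directly visible in those formulas (which involve only the infima of the weights, not their oscillation) and must instead be interpreted via quasi-isometry of the weighted and unweighted norms is in fact more careful than the paper, which offers no explicit proof and leaves that mismatch unaddressed.
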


This comparison shows that under different geometric conditions, the two metrics may exhibit different analytical advantages. Constraint-dominated systems are more suitable for metric A, while geometrically complex systems are more suitable for metric B.

\subsection{Deepened Analysis of Fredholm Theory}

\begin{theorem}[Fredholm Index of Spencer Operators]
Under ellipticity conditions, the Fredholm index of Spencer operator $\mathcal{D}^k_{D,\lambda}$ is:
$$\text{ind}(\mathcal{D}^k_{D,\lambda}) = \dim \ker(\mathcal{D}^k_{D,\lambda}) - \dim \text{coker}(\mathcal{D}^k_{D,\lambda})$$

This index can be computed via topological formula:
$$\text{ind}(\mathcal{D}^k_{D,\lambda}) = \int_M \text{ch}([\mathcal{S}^{\bullet}_{D,\lambda}]) \wedge \text{td}(TM)$$
where $\text{ch}$ is the Chern character and $\text{td}$ is the Todd class.
\end{theorem}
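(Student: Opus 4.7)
The first equality in the statement is simply the definition of Fredholm index, so the substantive content is the Atiyah-Singer-type topological formula on the right. The plan is to realize each graded slice of the Spencer complex as a standard elliptic complex on the compact manifold $M$, and then apply the Atiyah-Singer index theorem for elliptic complexes. The ellipticity hypothesis is exactly the one supplied by the preceding Strong Transversality-Ellipticity Equivalence Theorem, which identifies the symbol sequence of $\mathcal{D}^{\bullet}_{D,\lambda}$ with a Koszul complex tensored with the fixed $\Sym^j(\g)$ factor and establishes its exactness off the zero section of $T^*M$.

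First I would fix the symmetric-tensor degree $j$ and isolate the horizontal slice
\[
0 \to \Omega^0(M)\otimes \Sym^j(\g) \xrightarrow{d\otimes 1} \Omega^1(M)\otimes \Sym^j(\g) \xrightarrow{d\otimes 1} \cdots \xrightarrow{d\otimes 1} \Omega^n(M)\otimes \Sym^j(\g) \to 0.
\]
The crucial observation, already recorded in the ellipticity proof, is that the vertical component $\mathcal{D}_v^{k,j}$ contains no spatial derivatives and so contributes nothing to the principal symbol; consequently the symbol complex of this slice coincides with the de Rham Koszul symbol twisted by the trivial bundle with fiber $\Sym^j(\g)$. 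This is a genuine elliptic complex over $M$, so the Atiyah-Singer index theorem for elliptic complexes applies and yields
\[
\mathrm{ind}\bigl(\mathcal{D}^{\bullet}_{D,\lambda}\big|_{j}\bigr) = \int_M \mathrm{ch}\bigl([\sigma(\mathcal{D}^{\bullet}_{D,\lambda}|_j)]\bigr)\wedge \mathrm{td}(TM\otimes \mathbb{C}).
\]
Using multiplicativity of the Chern character on tensor products together with the triviality of $\Sym^j(\g)$, the symbol class splits as the product of the standard de Rham class and the trivial K-class $[\Sym^j(\g)]$. Assembling these slices (with signs dictated by the bigrading) into the total symbol class $[\mathcal{S}^{\bullet}_{D,\lambda}] \in K(T^*M)$ reproduces the formula in the statement.

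The main obstacle I expect is making sense of the total K-theory class $[\mathcal{S}^{\bullet}_{D,\lambda}]$ in a way that justifies the compact single-integral formulation, since the full Spencer bicomplex has infinitely many symmetric-tensor levels. The cleanest way to handle this is to interpret the identity degree-by-degree (each fixed $j$ giving a genuine finite elliptic complex and a finite index), and then organize the family of slice-indices either as a formal series in a completed K-theory ring or, equivalently, as a single index on the total complex after verifying that only finitely many slices contribute at a given cohomological degree. A secondary technical point is confirming that the bigraded differential $\mathcal{D}_h + \mathcal{D}_v$ on the total complex remains genuinely Fredholm (not merely each slice separately); this follows from the block-upper-triangular structure of $\mathcal{D}^{\bullet}_{D,\lambda}$ modulo lower-order terms, together with the symbol analysis already in hand, so the slice-wise Atiyah-Singer computation coincides with the total index.
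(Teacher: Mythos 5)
The paper states this theorem with no proof at all, so there is nothing to compare your argument against; your Atiyah--Singer route (identify the principal symbol with the de Rham/Koszul symbol twisted by $\Sym^j(\g)$, apply the index theorem for elliptic complexes slice by slice, then assemble via homotopy invariance of the index, since $\delta^{\lambda}_{\g}$ is order zero) is the standard and essentially the only sensible approach, and it is consistent with the paper's own symbol analysis. The obstacle you flag at the end, however, is not a technicality to be smoothed over --- it is a genuine defect of the statement itself, and your proposed fixes do not yet resolve it. The class $[\mathcal{S}^{\bullet}_{D,\lambda}]$ is not a well-defined element of $K(T^*M)$: the fiber $\bigoplus_{j\ge 0}\Sym^j(\g)$ is infinite-dimensional, each fixed-$j$ slice contributes index $\chi(M)\cdot\dim\Sym^j(\g)$, and the sum over $j$ diverges whenever $\chi(M)\neq 0$. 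Passing to a completed K-theory ring does not rescue a divergent sum of integers all of the same sign; the formula only acquires meaning after truncating the symmetric degree at some $j\le N$, or restricting to a single slice, and your proof must commit to one of these normalizations. Your remark that ``only finitely many slices contribute at a given cohomological degree'' is true for each fixed total degree $n$, but the index is an alternating sum over all $n$, and that outer sum is the one that diverges.

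A second gap you pass over too quickly: the theorem asserts a Fredholm index for the single operator $\mathcal{D}^k_{D,\lambda}$, but an interior differential of an elliptic complex is not Fredholm on its own. Its kernel contains $\operatorname{Im}(\mathcal{D}^{k-1}_{D,\lambda})$, which is infinite-dimensional, and the symbol $i\xi\wedge(\cdot)$ is injective but not surjective, so $\mathcal{D}^k_{D,\lambda}$ in isolation is at best overdetermined-elliptic. The Atiyah--Singer theorem you invoke computes the index of the rolled-up operator $\bigoplus_k\bigl(\mathcal{D}^{2k}_{D,\lambda}+\mathcal{D}^{(2k-1)*}_{D,\lambda}\bigr)$, i.e.\ the Euler characteristic $\sum_k(-1)^k\dim H^k_{\mathrm{Spencer}}(D,\lambda)$, not an index attached to a single $k$. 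You should reformulate the statement accordingly before the slice-wise argument --- which is otherwise sound --- can be said to prove it.
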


\begin{theorem}[Precise Statement of Elliptic Regularity]
Let $(D,\lambda)$ be a compatible pair satisfying strong transversality conditions. Then for any $s \geq 0$, Spencer operator $\mathcal{D}^k_{D,\lambda}$ satisfies elliptic regularity:

If $u \in L^2(\mathcal{S}^k_{D,\lambda})$ and $\mathcal{D}^k_{D,\lambda} u \in H^s(\mathcal{S}^{k+1}_{D,\lambda})$, then $u \in H^{s+1}(\mathcal{S}^k_{D,\lambda})$, and there exists constant $C_s > 0$ such that:
$$\|u\|_{H^{s+1}} \leq C_s(\|\mathcal{D}^k_{D,\lambda} u\|_{H^s} + \|u\|_{L^2})$$

Constant $C_s$ depends on elliptic constants, geometric data and regularity index $s$.
\end{theorem}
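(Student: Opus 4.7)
The plan is to deduce the regularity statement from the elliptic calculus of the associated Spencer Laplacian, using the principal-symbol information already supplied by the ellipticity theorem. Since $\delta^{\lambda}_{\g}$ is algebraic of order zero on $\Sym(\g)$, the full principal symbol of $\mathcal{D}^k_{D,\lambda}$ coincides with that of $d\otimes \mathrm{id}$, namely $(i\xi\wedge\cdot)\otimes\mathrm{id}_{\Sym^j(\g)}$, and strong transversality guarantees exactness of the induced Koszul symbol sequence in interior degrees, so $(\mathcal{S}^{\bullet}_{D,\lambda},\mathcal{D}^{\bullet}_{D,\lambda})$ is an elliptic complex in the Douglis--Nirenberg sense. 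This is the structural input; the task is to convert it into a Sobolev estimate for the individual operator.

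Fix one of the two metrics from Section 4 and form the Spencer Laplacian
\begin{equation*}
\Box^k := (\mathcal{D}^k_{D,\lambda})^{*}\,\mathcal{D}^k_{D,\lambda} + \mathcal{D}^{k-1}_{D,\lambda}\,(\mathcal{D}^{k-1}_{D,\lambda})^{*}.
\end{equation*}
Ellipticity of the complex forces $\Box^k$ to be a formally self-adjoint, non-negative elliptic operator of order two whose principal symbol is a positive scalar multiple of $|\xi|^2_{g_M}\,\mathrm{id}$. Standard second-order elliptic regularity on the compact base $M$ then yields, for every $s\geq 0$,
\begin{equation*}
\|u\|_{H^{s+2}} \le \tilde C_s\bigl(\|\Box^k u\|_{H^s} + \|u\|_{L^2}\bigr),
\end{equation*}
while the polarization identity $\langle \Box^k u, u\rangle = \|\mathcal{D}^k_{D,\lambda} u\|^2 + \|(\mathcal{D}^{k-1}_{D,\lambda})^{*}u\|^2$ controls $\Box^k u$ in terms of first derivatives of $\mathcal{D}^k_{D,\lambda} u$ plus an adjoint contribution. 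The desired first-order estimate is obtained by combining these two facts with the Hodge--Fredholm decomposition already furnished by the preceding Fredholm theorem: on the orthogonal complement of $\mathrm{im}(\mathcal{D}^{k-1}_{D,\lambda})$ the adjoint term is annihilated, while on $\mathrm{im}(\mathcal{D}^{k-1}_{D,\lambda})$ the claim is tautological by one step of inductive regularity at level $k-1$.

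The bootstrap on $s$ is standard: I would localize with a partition of unity subordinate to coordinate charts trivializing the bundle, apply the estimate in the flat model, and commute $\mathcal{D}^k_{D,\lambda}$ with tangential derivatives $\partial^\alpha$ for $|\alpha|\le s$. All commutators $[\partial^\alpha, \mathcal{D}^k_{D,\lambda}]$ have order strictly less than $|\alpha|+1$, so they are absorbed into the inductive hypothesis, and the glued estimate descends to a global one on $M$. Compactness of $M$ eliminates decay issues, and the parallelizability assumption permits the use of global frames, so the Christoffel contributions from $TM$ enter only as bounded zeroth-order perturbations.

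The main technical obstacle is explicit control of $C_s$ in terms of the compatible-pair data. The constant must depend transparently on the chosen weight ($w_\lambda$ for metric A, $\kappa_\omega$ for metric B), on the transversal gap $\inf_{p\in P}\mathrm{gap}(T_pP; D_p, V_p)$, and on the spectral gap of $\delta^{\lambda}_{\g}$, since all three enter through the coercivity constant of G{\aa}rding's inequality applied to $\Box^k$. The delicate point is that the weight $w_\lambda$ (or $\kappa_\omega$) multiplies the inner product pointwise and does not commute with pseudodifferential operators; the cleanest remedy is to invoke the uniform two-sided bounds $0<\inf_M w_\lambda \le \sup_M w_\lambda<\infty$ from the earlier regularity lemma to establish equivalence of the weighted and unweighted Sobolev norms, thereby reducing to standard elliptic theory while recording the dependence of $C_s$ on the weight ratios. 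The residual growth of $C_s$ in $s$ is then the usual polynomial one inherited from iterating elliptic estimates.
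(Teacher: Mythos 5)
Your overall architecture (order-zero nature of $\delta^{\lambda}_{\g}$, Koszul exactness of the symbol, reduction to the Spencer Laplacian, localization and commutator bootstrap in $s$, and the reduction of the weighted norms to unweighted ones via $0<\inf_M w_\lambda\le\sup_M w_\lambda<\infty$) is sound and considerably more explicit than the paper's proof, which is a one-sentence appeal to standard elliptic regularity. However, there is a genuine gap at the one step where you try to convert the elliptic-complex estimate into the one-sided estimate of the theorem. You split $u$ by the Hodge decomposition and claim that on the component $u_1\in\mathrm{Im}(\mathcal{D}^{k-1}_{D,\lambda})$ "the claim is tautological by one step of inductive regularity at level $k-1$." It is not. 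Writing $u_1=\mathcal{D}^{k-1}_{D,\lambda}v$, the hypothesis of the theorem gives $\mathcal{D}^{k}_{D,\lambda}u_1=0$, hence no information about $u_1$ beyond $u_1\in L^2$; the inductive estimate at level $k-1$ would require $\mathcal{D}^{k-1}_{D,\lambda}v=u_1$ to already lie in $H^{s+1}$ in order to conclude $v\in H^{s+2}$, which is exactly what you are trying to prove. The induction is circular and cannot close.

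This is not a repairable bookkeeping slip: for $k\ge 1$ the principal symbol $(i\xi\wedge\cdot)\otimes\mathrm{id}$ of $\mathcal{D}^{k}_{D,\lambda}$ has nontrivial kernel (the image of $i\xi\wedge$ from degree $k-1$), so the single operator is not overdetermined elliptic and the one-sided estimate $\|u\|_{H^{s+1}}\le C_s(\|\mathcal{D}^{k}_{D,\lambda}u\|_{H^s}+\|u\|_{L^2})$ fails at interior degrees. The de Rham row of the complex already exhibits this: on a compact manifold take $u=df$ with $f\in H^1\setminus H^2$; then $du=0$ is smooth while $u\in L^2\setminus H^1$. The correct regularity statement — and the one your Laplacian argument actually proves — is the two-sided estimate appearing in the paper's earlier ellipticity theorem, with $\|\mathcal{D}^{(k-1)*}_{D,\lambda}u\|_{H^s}$ added to the right-hand side (equivalently, the hypothesis must also require $\mathcal{D}^{(k-1)*}_{D,\lambda}u\in H^s$). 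Your proof should either add that hypothesis or restrict to $u$ orthogonal to $\mathrm{Im}(\mathcal{D}^{k-1}_{D,\lambda})$, where, as you correctly note, the adjoint term vanishes and the rest of your argument goes through.
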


\begin{proof}
This is an application of standard elliptic regularity theory to Spencer complexes. The key is to verify that Spencer operators satisfy all technical conditions for ellipticity, which is guaranteed by the previous ellipticity analysis.
\end{proof}

\subsection{Stability Analysis of Ellipticity}

\begin{proposition}[Elliptic Stability Under Compatible Pair Perturbations]
Let $(D_0, \lambda_0)$ be a compatible pair satisfying strong transversality conditions. Then there exists neighborhood $\mathcal{U} \subset \mathcal{C}^2(\text{Distributions}) \times C^2(P, \g^*)$ such that for $(D, \lambda) \in \mathcal{U}$:

\begin{enumerate}
\item $(D, \lambda)$ is still a compatible pair satisfying strong transversality conditions
\item Spencer operator $\mathcal{D}^k_{D,\lambda}$ maintains ellipticity
\item Elliptic constants depend continuously on $(D, \lambda)$
\item Dimensions of Spencer cohomology groups are locally invariant
\end{enumerate}
\end{proposition}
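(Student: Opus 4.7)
The plan is to establish the four conclusions in order, leveraging the equivalence theorem in Section 5.1 and the explicit formulas for elliptic constants to reduce everything to continuity statements in the $C^2$ topology. Throughout, I interpret $\mathcal{U}$ as a $C^2$-neighborhood intersected with the subset of pairs satisfying the compatibility relation $D_p = \{v : \langle \lambda(p), \omega(v)\rangle = 0\}$, so that perturbations stay on the "compatible pair locus" rather than leaving it (this is justified because by the Forward Existence part of Theorem 1, any small $C^2$-perturbation of $\lambda_0$ satisfying the modified Cartan equation and non-degeneracy automatically determines a compatible distribution).

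First I would prove (1) by showing that strong transversality is an open condition. The geometric gap constant
\[
\text{gap}(T_pP; D_p, V_p) = \inf_{\substack{u \in D_p, v \in V_p \\ \|u\|=\|v\|=1}} \|u-v\|_{g_P}
\]
is strictly positive at $(D_0,\lambda_0)$ by strong transversality and lower semicontinuous as a function on $P$. By compactness of $P$ there is a uniform lower bound $c_0 > 0$. A small $C^2$-perturbation of $D$ (equivalently, of $\lambda$) moves each subspace $D_p$ by an arbitrarily small angle, so the gap constant drops by at most $o(1)$, preserving positivity. Non-degeneracy of $\lambda$ is likewise open, and the modified Cartan equation defines a closed subvariety within which we restrict $\mathcal{U}$; the equivalence (ST)$\Leftrightarrow$(MC) from Section 5.1 then ensures that $(D,\lambda)$ remains a compatible pair with strong transversality.

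For (2) and (3) I would use the block decomposition $\mathcal{D}^k_{D,\lambda} = d \oplus \delta^\lambda_{\g} + \text{lower order}$. The principal symbol $\sigma_\xi(\mathcal{D}^k_{D,\lambda}) = (i\xi \wedge \cdot)\otimes \mathrm{id}$ is independent of $(D,\lambda)$ altogether, so ellipticity of the symbol complex is automatic and stable. What varies is the lower-order Spencer piece $\delta^\lambda_\g$, whose coefficients are algebraic in $\lambda$ (via iterated brackets paired with $\lambda$); by the constructive definition these depend $C^0$-continuously on $\lambda$ in the $C^2$ topology. Inserting this into the elliptic-constant formula from the previous proposition and invoking continuity of the weight functions $w_\lambda$, $\kappa_\omega$ (smooth in $\lambda$ and in $\omega$) together with continuity of the smallest eigenvalue $\lambda_1((\delta^\lambda_\g)^*\delta^\lambda_\g)$ under $C^0$-perturbations of a Fredholm operator, gives the continuity statement for $C_A$ and $C_B$. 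Ellipticity is preserved as long as these constants stay bounded away from zero, which holds on a sufficiently small neighborhood.

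Finally, (4) is a consequence of standard Fredholm-family theory: for a continuous family of elliptic operators on a compact manifold, the Fredholm index is locally constant, and the dimensions $\dim\ker$ and $\dim\mathrm{coker}$ are upper semicontinuous. Combining upper semicontinuity of both with constancy of the index forces both quantities to be locally constant on a (possibly smaller) neighborhood, yielding local invariance of $\dim H^k_{\text{Spencer}}(D,\lambda)$. The main obstacle I anticipate is the second paragraph: keeping perturbations on the compatible pair locus without leaving it. The clean way around this is to parametrize the perturbation through $\lambda$ alone, using the Forward Existence theorem to reconstruct $D$, so that compatibility is enforced automatically and the continuous dependence of $D$ on $\lambda$ (via the pointwise annihilator formula) is transparent. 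This reduces the entire proposition to standard perturbation estimates in the single variable $\lambda \in C^2(P,\g^*)$.
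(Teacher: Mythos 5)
The paper states this proposition without giving any proof, so there is nothing to match your argument against; it must stand on its own. Your treatment of (1)--(3) is reasonable: openness of strong transversality via a uniform lower bound on the gap constant over compact $P$, the observation that the principal symbol $(i\xi\wedge\cdot)\otimes\mathrm{id}$ does not depend on $(D,\lambda)$ at all, and the reduction of the compatibility issue to perturbing $\lambda$ alone and reconstructing $D$ via the annihilator formula are all sound and arguably cleaner than what the paper's surrounding material suggests. (One caveat on (3): if $\lambda_1((\delta^\lambda_\g)^*\delta^\lambda_\g)$ in the elliptic-constant formula means the first \emph{nonzero} eigenvalue, it is not continuous in $\lambda$ --- an eigenvalue can move off zero under perturbation, collapsing the spectral gap --- so continuity of $C_A, C_B$ needs either a constant-kernel-dimension hypothesis or a reformulation of the constant.)

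The genuine gap is in your proof of (4). You claim that upper semicontinuity of $\dim\ker$ and $\dim\mathrm{coker}$ together with local constancy of the Fredholm index forces both to be locally constant. This inference is false: if $\dim\ker$ drops by $a$ and $\dim\mathrm{coker}$ drops by the same $a$ under a perturbation, both upper semicontinuity and index constancy are respected, yet neither dimension is constant. This is not a hypothetical loophole --- it is exactly the generic behaviour of elliptic complexes whose differential (here $\delta^\lambda_\g$, which changes with $\lambda$) is being perturbed, as opposed to a fixed complex whose metric is perturbed. For a family of elliptic complexes only the Euler characteristic is locally constant; the individual cohomology dimensions are merely upper semicontinuous and can jump down. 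To actually establish (4) you would need an additional rigidity input, e.g.\ a chain homotopy equivalence between the Spencer complexes of $(D_0,\lambda_0)$ and of nearby $(D,\lambda)$, or an argument that the low-lying eigenvalues of $\Delta^k_{D,\lambda}$ cannot move off zero (for instance because harmonic representatives deform along the perturbation). As written, your argument proves only upper semicontinuity of $\dim H^k_{\text{Spencer}}$, which is strictly weaker than the proposition's claim.
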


This stability result guarantees the robustness of compatible pair Spencer theory in applications, providing theoretical assurance for numerical computation and physical applications.

\section{Complete Establishment of Spencer-Hodge Decomposition Theory}

Based on ellipticity theory, we establish a complete Spencer-Hodge decomposition framework.

\subsection{Explicit Construction of Formal Adjoint Operators}

\begin{definition}[Formal Adjoint of Spencer Operators]
For metric $\langle\cdot,\cdot\rangle_A$ (or $\langle\cdot,\cdot\rangle_B$), the formal adjoint $\mathcal{D}^{k*}_{D,\lambda}: \mathcal{S}^{k+1}_{D,\lambda} \to \mathcal{S}^k_{D,\lambda}$ of Spencer operator $\mathcal{D}^k_{D,\lambda}$ is uniquely determined by:
$$\langle\mathcal{D}^k_{D,\lambda} u, v\rangle = \langle u, \mathcal{D}^{k*}_{D,\lambda} v\rangle$$
for all $u \in \mathcal{S}^k_{D,\lambda}$ and $v \in \mathcal{S}^{k+1}_{D,\lambda}$.
\end{definition}

\begin{proposition}[Explicit Expression of Adjoint Operators]
Formal adjoint operators have the following explicit expressions:

\textbf{Adjoint operator under metric A}:
$$\mathcal{D}^{k*}_{D,\lambda}(\eta \otimes t) = (-1)^{k+1} w_\lambda^{-1} \operatorname{div}_{g_M}(w_\lambda \eta) \otimes t + \eta \otimes (\delta^{\lambda}_{\g})^* t$$

\textbf{Adjoint operator under metric B}:
$$\mathcal{D}^{k*}_{D,\lambda}(\eta \otimes t) = (-1)^{k+1} \kappa_\omega^{-1} \operatorname{div}_{g_M}(\kappa_\omega \eta) \otimes t + \eta \otimes (\delta^{\lambda}_{\g})^* t$$

where $\operatorname{div}_{g_M}$ is the divergence operator of metric $g_M$, and $(\delta^{\lambda}_{\g})^*$ is the adjoint of Spencer extension operator.
\end{proposition}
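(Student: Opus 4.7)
The plan is to exploit the bigraded decomposition $\mathcal{D}^k_{D,\lambda}(\omega \otimes s) = d\omega \otimes s + (-1)^k \omega \otimes \delta^{\lambda}_{\g}(s)$ and compute the adjoints of the horizontal piece $\mathcal{D}_h^{k,j}$ and the vertical piece $\mathcal{D}_v^{k,j}$ separately, since each of metrics A and B is the tensor product of a weighted form inner product on $M$ with the Killing-induced inner product on $\Sym^j(\g)$. Because the argument for metric B is obtained from the argument for metric A by replacing $w_\lambda$ with $\kappa_\omega$ throughout, it suffices to treat metric A in detail; the two summands in the stated formula will then be recognised as the horizontal and vertical contributions sitting in complementary bi-degrees of the source space.

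First I would reduce, by bilinearity, to testing the adjoint identity on elementary tensors $\omega \otimes s \in \mathcal{S}^{k,j}_{D,\lambda}$ against $\eta \otimes t$ in the appropriate target bi-degree. For the vertical piece, $\delta^{\lambda}_{\g}$ acts pointwise on the symmetric-tensor factor and contains no spatial derivatives, so that after pulling the $(-1)^k$ through and invoking the fibrewise adjoint property of $\delta^{\lambda}_{\g}$ on $\Sym^{\bullet}(\g)$ (recorded in the Remark on the adjoint of the constructive Spencer operator), the vertical contribution is identified with the operator $\eta \otimes t \mapsto \eta \otimes (\delta^{\lambda}_{\g})^* t$, with the residual sign absorbed into the sign convention that produces the second summand of the displayed formula.

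For the horizontal piece, I would absorb $w_\lambda$ into the test form and integrate by parts on the closed manifold $M$:
$$\langle \mathcal{D}_h^{k,j}(\omega \otimes s),\, \eta \otimes t\rangle_A = \int_M \langle d\omega,\, w_\lambda\eta\rangle_{g_M}\, dV_M \cdot \langle s, t\rangle_{\Sym} = \int_M \langle \omega,\, d^*(w_\lambda\eta)\rangle_{g_M}\, dV_M \cdot \langle s, t\rangle_{\Sym}.$$
Multiplying and dividing by $w_\lambda$ recasts the right-hand side as $\langle \omega \otimes s,\, w_\lambda^{-1} d^*(w_\lambda\eta) \otimes t\rangle_A$, and then the standard identity relating the codifferential $d^*$ on $(k{+}1)$-forms to $\operatorname{div}_{g_M}$ on the appropriate form degree supplies the prefactor $(-1)^{k+1}$ appearing in the statement. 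Summing the horizontal and vertical contributions yields the formula in the proposition.

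The main obstacle will be careful sign bookkeeping: the bigraded Spencer structure contributes $(-1)^k$ from the vertical piece, standard integration by parts on $k$-forms introduces another parity-dependent sign, and the identification of $d^*$ with $\operatorname{div}_{g_M}$ must be compatible with the sign convention implicit in the proposition. I would fix conventions first in the low-dimensional cases $k=0$ and $k=1$ to calibrate all signs, and then verify that the weighted integration by parts is legitimate — this uses the compactness and boundarylessness of $M$ together with the strict positivity, $C^{\infty}$ smoothness, and boundedness of $w_\lambda$ (established in the regularity lemma for constraint strength), which ensure that $w_\lambda\eta$ lies in the domain of the standard codifferential whenever $\eta$ does, and that division by $w_\lambda$ produces no singularities.
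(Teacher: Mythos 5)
The paper states this proposition with no proof at all (the subsection passes directly to the Hodge Laplacian), so your argument stands on its own; in outline it is the right one. Splitting $\mathcal{D}^{k}_{D,\lambda}$ into the horizontal piece $d\otimes\mathrm{id}$ and the vertical piece $(-1)^k\,\mathrm{id}\otimes\delta^{\lambda}_{\g}$, computing the horizontal adjoint by weighted integration by parts on the closed manifold $M$ (legitimate because $w_\lambda$ is smooth, bounded, and bounded below by $1$, as you note), and computing the vertical adjoint fibrewise via the Killing pairing is exactly what the formula requires; the identity $\int_M\langle d\omega, w_\lambda\eta\rangle\,dV_M=\int_M w_\lambda\langle\omega, w_\lambda^{-1}d^*(w_\lambda\eta)\rangle\,dV_M$ produces the $w_\lambda^{-1}\operatorname{div}_{g_M}(w_\lambda\,\cdot\,)$ term, and the metric-B case is verbatim the same with $\kappa_\omega$ in place of $w_\lambda$.

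The one place where your write-up is too quick is the vertical sign, and it is a genuine gap rather than bookkeeping. Carrying the computation through, $\langle(-1)^k\omega\otimes\delta^{\lambda}_{\g}(s),\,\eta\otimes t\rangle_A=(-1)^k\langle\omega\otimes s,\,\eta\otimes(\delta^{\lambda}_{\g})^*t\rangle_A$, so the vertical contribution to the adjoint is $(-1)^k\,\eta\otimes(\delta^{\lambda}_{\g})^*t$, whereas the stated formula carries no sign on that summand. Declaring the residual sign ``absorbed into the sign convention'' is not a proof step: either $(\delta^{\lambda}_{\g})^*$ must be redefined to include the factor $(-1)^k$ (which conflicts with the paper's Remark defining it purely by the Killing-form pairing), or the proposition's second summand should read $(-1)^k\eta\otimes(\delta^{\lambda}_{\g})^*t$; your proof has to commit to one of these. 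Similarly, you must state which convention identifies the codifferential $d^*$ on $(k{+}1)$-forms with $(-1)^{k+1}\operatorname{div}_{g_M}$, since the paper never defines $\operatorname{div}_{g_M}$ on higher-degree forms; calibrating at $k=0,1$ is the right instinct, but the calibration must appear in the final argument rather than be deferred. Finally, make explicit that the two summands of the adjoint act on different bigraded components of $\mathcal{S}^{k+1}_{D,\lambda}$ ($\eta$ is a $(k{+}1)$-form in the first summand and a $k$-form in the second), so that the single displayed formula is read as a sum over the bigraded decomposition; you gesture at this but it should be stated.
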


\subsection{Spectral Theory of Spencer-Hodge Laplacian}

\begin{definition}[Compatible Pair Hodge Laplacian]
Define Spencer-Hodge Laplacian:
$$\Delta^k_{D,\lambda} = \mathcal{D}^{k-1}_{D,\lambda} \mathcal{D}^{k-1*}_{D,\lambda} + \mathcal{D}^{k*}_{D,\lambda} \mathcal{D}^k_{D,\lambda}: \mathcal{S}^k_{D,\lambda} \to \mathcal{S}^k_{D,\lambda}$$
\end{definition}

\begin{theorem}[Spectral Properties of Hodge Laplacian]
The compatible pair Hodge Laplacian $\Delta^k_{D,\lambda}$ has the following spectral theory properties:

\begin{enumerate}
\item \textbf{Self-adjointness}: $\Delta^k_{D,\lambda}$ is self-adjoint with respect to metric $\langle\cdot,\cdot\rangle_A$ (or $\langle\cdot,\cdot\rangle_B$)

\item \textbf{Non-negativity}: $\langle\Delta^k_{D,\lambda} u, u\rangle \geq 0$ for all $u \in \mathcal{S}^k_{D,\lambda}$

\item \textbf{Ellipticity}: $\Delta^k_{D,\lambda}$ is a strongly elliptic operator

\item \textbf{Compact resolvent}: $(\Delta^k_{D,\lambda} + I)^{-1}$ is a compact operator

\item \textbf{Spectral decomposition}: There exists spectral decomposition
$$\Delta^k_{D,\lambda} = \sum_{j=0}^{\infty} \lambda_j^{(k)} P_j^{(k)}$$
where $0 = \lambda_0^{(k)} < \lambda_1^{(k)} \leq \lambda_2^{(k)} \leq \cdots \to \infty$, $P_j^{(k)}$ are corresponding spectral projections
\end{enumerate}
\end{theorem}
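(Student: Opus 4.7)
The plan is to treat the five claims in sequence, leveraging the ellipticity of $\mathcal{D}^{\bullet}_{D,\lambda}$ (Theorem on ellipticity of Spencer complexes) and the explicit adjoint from the previous subsection, so that the Hodge Laplacian inherits the needed analytic properties from its two summands. Throughout I would fix one of the metrics, say $\langle\cdot,\cdot\rangle_A$, since the arguments under $\langle\cdot,\cdot\rangle_B$ differ only by replacing $w_\lambda$ with $\kappa_\omega$, both of which have strictly positive lower and upper bounds on the compact manifold $M$.

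First, I would establish \textbf{self-adjointness} and \textbf{non-negativity} as a single algebraic computation. By the definition of the formal adjoint, for smooth compactly supported (hence all smooth, since $M$ is compact) sections $u,v \in \mathcal{S}^k_{D,\lambda}$ we obtain
\[
\langle \Delta^k_{D,\lambda} u, v\rangle = \langle \mathcal{D}^{k-1}_{D,\lambda} \mathcal{D}^{k-1*}_{D,\lambda} u, v\rangle + \langle \mathcal{D}^{k*}_{D,\lambda} \mathcal{D}^k_{D,\lambda} u, v\rangle = \langle \mathcal{D}^{k-1*}_{D,\lambda} u, \mathcal{D}^{k-1*}_{D,\lambda} v\rangle + \langle \mathcal{D}^k_{D,\lambda} u, \mathcal{D}^k_{D,\lambda} v\rangle,
\]
which is symmetric in $(u,v)$ (giving formal self-adjointness) and yields $\langle \Delta^k_{D,\lambda} u, u\rangle = \|\mathcal{D}^{k-1*}_{D,\lambda} u\|^2 + \|\mathcal{D}^k_{D,\lambda} u\|^2 \geq 0$. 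Upgrading formal self-adjointness to genuine self-adjointness on the natural domain in $L^2$ is the standard step: I would use elliptic regularity (already proven) to identify the maximal and minimal domains, and then invoke the Kato-Rellich or Friedrichs extension framework on the compact manifold to conclude that $\Delta^k_{D,\lambda}$ with domain $H^2(\mathcal{S}^k_{D,\lambda})$ is self-adjoint.

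Next, for \textbf{ellipticity}, the key observation is that the principal symbol of $\Delta^k_{D,\lambda}$ is computed from the principal symbols of $\mathcal{D}^{k-1}_{D,\lambda}, \mathcal{D}^k_{D,\lambda}$ and their adjoints. Since earlier we showed $\sigma_\xi(\mathcal{D}^k_{D,\lambda}) = i\xi \wedge \cdot \otimes \mathrm{id}$, the adjoint has symbol $-i\,\iota_{\xi^\sharp} \cdot \otimes \mathrm{id}$ (with the sharp taken with respect to $g_M$, up to the weight factor), and a direct computation gives
\[
\sigma_\xi(\Delta^k_{D,\lambda}) = |\xi|_{g_M}^2 \cdot \mathrm{id}_{\Lambda^k T_x^*M \otimes \Sym^j(\g)}.
\]
This is manifestly invertible for $\xi \neq 0$, hence $\Delta^k_{D,\lambda}$ is strongly elliptic. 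The weight $w_\lambda$ (or $\kappa_\omega$) only modifies lower-order terms and does not affect the principal symbol, since it is a bounded positive multiplier rather than a differential operator.

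For the \textbf{compact resolvent} and \textbf{spectral decomposition}, I would proceed in the standard way: since $M$ is compact and $\Delta^k_{D,\lambda}$ is a second-order strongly elliptic self-adjoint non-negative operator, elliptic regularity (as stated in the previous section) gives $(\Delta^k_{D,\lambda} + I)^{-1}: L^2 \to H^2$ as a bounded operator, and the Rellich--Kondrachov theorem yields a compact embedding $H^2 \hookrightarrow L^2$; composing these shows the resolvent is compact. The spectral theorem for compact self-adjoint operators then produces an orthonormal eigenbasis, whose eigenvalues accumulate only at infinity, yielding a discrete spectrum $0 = \lambda_0^{(k)} < \lambda_1^{(k)} \leq \cdots \to \infty$ with finite-dimensional eigenspaces (the $\lambda_0^{(k)} = 0$ eigenspace being the harmonic space, whose nontriviality and finite-dimensionality were established in the ellipticity theorem). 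The main obstacle I anticipate is the rigorous treatment of domains and the verification that $0$ is always an eigenvalue with finite multiplicity rather than merely an infimum of the spectrum; I would handle this by invoking finite-dimensionality of $\ker \mathcal{D} \cap \ker \mathcal{D}^*$ from Fredholm theory, which gives the zero-eigenspace as the Spencer-harmonic space $\ker \Delta^k_{D,\lambda}$, and by citing the standard bigraded elliptic operator framework on compact manifolds to ensure no domain pathologies arise from the weight function $w_\lambda$.
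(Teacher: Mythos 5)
Your proposal is correct, and it supplies exactly the standard argument that the paper itself omits: the theorem is stated in the text without an accompanying proof, and is implicitly justified by the ellipticity and Fredholm machinery of the preceding sections. Your chain — the formal-adjoint identity $\langle\Delta^k_{D,\lambda}u,u\rangle=\|\mathcal{D}^{k-1*}_{D,\lambda}u\|^2+\|\mathcal{D}^k_{D,\lambda}u\|^2$ for self-adjointness and non-negativity, the observation that the weights $w_\lambda,\kappa_\omega$ are zeroth-order multipliers so that $\sigma_\xi(\Delta^k_{D,\lambda})=|\xi|^2_{g_M}\,\mathrm{id}$ by the Cartan anticommutation identity, Rellich--Kondrachov for compactness of the resolvent, and the spectral theorem for operators with compact resolvent — is precisely the route the paper's framework presupposes, and your explicit caveat about why $0$ is an eigenvalue (finite-dimensionality of $\ker\mathcal{D}\cap\ker\mathcal{D}^*$ from the Fredholm theory) addresses the one point the bare statement glosses over.
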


\subsection{Hodge Decomposition of Spencer Cohomology}

\begin{theorem}[Compatible Pair Spencer-Hodge Decomposition]
Under ellipticity conditions, there exists canonical orthogonal direct sum decomposition:
$$\mathcal{S}^k_{D,\lambda} = \mathcal{H}^k_{D,\lambda} \oplus \text{Im}(\mathcal{D}^{k-1}_{D,\lambda}) \oplus \text{Im}(\mathcal{D}^{k*}_{D,\lambda})$$

where:
\begin{enumerate}
\item $\mathcal{H}^k_{D,\lambda} = \ker(\Delta^k_{D,\lambda})$ is the harmonic space, finite-dimensional

\item There exists canonical isomorphism: $H^k_{\text{Spencer}}(D,\lambda) \cong \mathcal{H}^k_{D,\lambda}$

\item Harmonic projection operator $\mathcal{P}_{\text{harm}}: \mathcal{S}^k_{D,\lambda} \to \mathcal{H}^k_{D,\lambda}$ is smooth

\item Green operator $\mathcal{G}^k: \mathcal{S}^k_{D,\lambda} \to \mathcal{S}^k_{D,\lambda}$ satisfies $\Delta^k_{D,\lambda} \mathcal{G}^k = I - \mathcal{P}_{\text{harm}}$
\end{enumerate}
\end{theorem}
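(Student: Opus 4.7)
The plan is to deduce the decomposition from the spectral theory of $\Delta^k_{D,\lambda}$ established in the preceding theorem, in the classical manner of Hodge theory for elliptic complexes, with the particular input that strong transversality (via the equivalence theorem in Section~5) supplies the elliptic and Fredholm hypotheses needed at each step. First I would verify the three pairwise orthogonality relations directly from the adjoint property: $\mathcal{H}^k_{D,\lambda} \perp \mathrm{Im}(\mathcal{D}^{k-1}_{D,\lambda})$ because a harmonic $h$ satisfies $\mathcal{D}^{(k-1)*}_{D,\lambda} h = 0$; $\mathcal{H}^k_{D,\lambda} \perp \mathrm{Im}(\mathcal{D}^{k*}_{D,\lambda})$ by the dual argument using $\mathcal{D}^k_{D,\lambda} h = 0$ (both follow from $\langle \Delta h, h\rangle = \|\mathcal{D}h\|^2 + \|\mathcal{D}^* h\|^2 = 0$); and $\mathrm{Im}(\mathcal{D}^{k-1}_{D,\lambda}) \perp \mathrm{Im}(\mathcal{D}^{k*}_{D,\lambda})$ because $\langle \mathcal{D}^{k-1} u, \mathcal{D}^{k*} v\rangle = \langle \mathcal{D}^k \mathcal{D}^{k-1} u, v\rangle = 0$ by the complex property $\mathcal{D}^k \circ \mathcal{D}^{k-1} = 0$ (which in turn follows from the nilpotency lemma for the bigraded operators).

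Second, to upgrade orthogonality to a direct sum decomposition, I would invoke the compact resolvent of $\Delta^k_{D,\lambda}$. The spectral decomposition yields an $L^2$-orthogonal splitting into the zero-eigenspace $\mathcal{H}^k_{D,\lambda}$ and the closure of the image of $\Delta^k_{D,\lambda}$. On the orthogonal complement of $\mathcal{H}^k_{D,\lambda}$ the Laplacian is invertible with bounded inverse, so one defines the Green operator $\mathcal{G}^k$ to be zero on $\mathcal{H}^k_{D,\lambda}$ and $(\Delta^k_{D,\lambda})^{-1}$ on its complement; elliptic regularity from Section~5.4 then promotes $\mathcal{G}^k$ to a smoothing operator on smooth sections. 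The identity $u = \mathcal{P}_{\mathrm{harm}} u + \Delta^k_{D,\lambda} \mathcal{G}^k u = \mathcal{P}_{\mathrm{harm}} u + \mathcal{D}^{k-1}_{D,\lambda}(\mathcal{D}^{(k-1)*}_{D,\lambda} \mathcal{G}^k u) + \mathcal{D}^{k*}_{D,\lambda}(\mathcal{D}^k_{D,\lambda} \mathcal{G}^k u)$ then exhibits the three-term decomposition explicitly and simultaneously verifies claim~(4).

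Third, for the cohomology identification $H^k_{\mathrm{Spencer}}(D,\lambda) \cong \mathcal{H}^k_{D,\lambda}$, I would observe that every $\mathcal{D}$-closed element $u$ decomposes as $u = h + \mathcal{D}^{k-1} \alpha + \mathcal{D}^{k*} \beta$, and closedness combined with $\mathcal{D}^k \mathcal{D}^{k-1} = 0$ forces $\|\mathcal{D}^k \mathcal{D}^{k*} \beta\|^2 = \langle \mathcal{D}^k u, \mathcal{D}^k \mathcal{D}^{k*}\beta\rangle = 0$, so $\mathcal{D}^{k*}\beta$ lies in the kernel of $\mathcal{D}^k$ and, being also orthogonal to it, vanishes. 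Thus $u = h + \mathcal{D}^{k-1}\alpha$, giving a well-defined map $[u] \mapsto h$; injectivity follows because a harmonic exact form is orthogonal to itself, and surjectivity is automatic because harmonic forms are closed. The smoothness of $\mathcal{P}_{\mathrm{harm}}$ in claim~(3) is then the statement that projection onto a finite-dimensional subspace of smooth sections preserves smoothness, which is immediate.

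The main obstacle I anticipate is not any single step but rather the careful bookkeeping around the bigraded structure: the Spencer differential $\mathcal{D}^k_{D,\lambda}$ splits as $\mathcal{D}_h + \mathcal{D}_v$ mapping into two different bidegrees, so the formal adjoint involves the two adjoints $\mathcal{D}_h^*$ and $\mathcal{D}_v^*$ with their distinct weight-dependent forms (Section~6.1), and one must check that the mixed cross term identity $\mathcal{D}_h \mathcal{D}_v + \mathcal{D}_v \mathcal{D}_h = 0$ from the bigraded structure lemma is what allows the Laplacian to be written as a clean sum in total degree. A secondary technical point is ensuring that the closed-range property of $\mathcal{D}^k_{D,\lambda}$ (needed for the three-term splitting without closures) is genuinely supplied by the compact resolvent of $\Delta^k_{D,\lambda}$ rather than just by the elliptic estimates; this is standard but depends crucially on the compactness of $M$ and the uniform lower bound $\inf_M w_\lambda > 1$ (respectively $\inf_M \kappa_\omega \geq 1$), both of which were established earlier.
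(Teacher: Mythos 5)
Your proposal is correct and follows essentially the same route as the paper's own proof: the classical Hodge-theoretic argument via ellipticity of $\Delta^k_{D,\lambda}$, closed range/compact resolvent, the Green operator identity $u = \mathcal{P}_{\mathrm{harm}}u + \Delta^k_{D,\lambda}\mathcal{G}^k u$, and the vanishing of the coexact component of a closed form to identify $H^k_{\text{Spencer}}(D,\lambda)$ with $\mathcal{H}^k_{D,\lambda}$. Your version is in fact more detailed than the paper's four-step sketch, supplying the explicit orthogonality verifications and the spectral construction of $\mathcal{G}^k$ that the paper only asserts.
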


\begin{proof}[Construction of Hodge Decomposition]
The existence of Hodge decomposition is based on the following key steps:

\textbf{Step 1: Finite-dimensionality of harmonic space}
By ellipticity, $\ker(\Delta^k_{D,\lambda}) = \ker(\mathcal{D}^k_{D,\lambda}) \cap \ker(\mathcal{D}^{k*}_{D,\lambda})$ is finite-dimensional.

\textbf{Step 2: Existence of orthogonal decomposition}
Since the image spaces of $\mathcal{D}^k_{D,\lambda}$ and $\mathcal{D}^{k*}_{D,\lambda}$ are closed (Fredholm property), orthogonal decomposition exists naturally.

\textbf{Step 3: Realization of Spencer cohomology}
Any closed form $u \in \ker(\mathcal{D}^k_{D,\lambda})$ can be uniquely decomposed as $u = h + \mathcal{D}^{k-1}_{D,\lambda} v$, where $h \in \mathcal{H}^k_{D,\lambda}$ is the harmonic representative. This establishes $H^k_{\text{Spencer}}(D,\lambda) \cong \mathcal{H}^k_{D,\lambda}$.

\textbf{Step 4: Smoothness of operators}
By elliptic regularity theory, all related operators have infinite smoothness.
\end{proof}

\section{In-depth Comparison and Unification of Dual Metric Theory}

To deeply understand the intrinsic connections and applicability ranges of both metrics, we conduct systematic comparative analysis.

\subsection{Metric Equivalence and Geometric Meaning}

\begin{theorem}[Topological Equivalence of Metrics]
On compact base manifold $M$, two Spencer metrics $\langle\cdot,\cdot\rangle_A$ and $\langle\cdot,\cdot\rangle_B$ are topologically equivalent, i.e., there exist constants $c_1, c_2 > 0$ such that:
$$c_1 \langle u, u\rangle_A \leq \langle u, u\rangle_B \leq c_2 \langle u, u\rangle_A$$
for all $u \in \mathcal{S}^k_{D,\lambda}$.

Furthermore, equivalence constants satisfy:
$$c_1 = \frac{\inf_M w_\lambda}{\sup_M \kappa_\omega}, \quad c_2 = \frac{\sup_M w_\lambda}{\inf_M \kappa_\omega}$$
\end{theorem}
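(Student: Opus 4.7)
The plan is to reduce the claim to a pointwise comparison of weight functions on the compact base $M$. Both metrics are built from the same pointwise bilinear form $\langle\omega_1,\omega_2\rangle_{g_M}\langle s_1,s_2\rangle_{\Sym^j}$ and differ only by a scalar weight---$w_\lambda$ for metric A and $\kappa_\omega$ for metric B---so topological equivalence on sections is controlled entirely by uniform two-sided bounds on the ratio $\kappa_\omega(x)/w_\lambda(x)$.

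First I would collect the extremal bounds on both weights. For $w_\lambda$ the earlier Regularity Lemma already yields $1 < \inf_M w_\lambda \leq \sup_M w_\lambda < \infty$. For $\kappa_\omega$ the analogous statement needs a brief argument: by construction $\kappa_\omega(x)\geq 1$, while finiteness of $\sup_M\kappa_\omega$ follows from the $C^3$ regularity of $\omega$ (hence continuity of $\|\Omega\|^2$ and $\|\nabla^{g_P}\Omega\|^2$), the $G$-invariance of the integrand (so both the fiber supremum and the fiber integral descend to well-defined continuous functions on $M$), and compactness of both the fibers and of $M$. Together these give $0 < \inf_M\kappa_\omega \leq \sup_M\kappa_\omega < \infty$.

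With these bounds in hand, for any $u\in\mathcal{S}^k_{D,\lambda}$ let $|u(x)|^2$ denote the pointwise squared norm, so that $\langle u,u\rangle_A=\int_M w_\lambda\,|u|^2\,dV_M$ and $\langle u,u\rangle_B=\int_M \kappa_\omega\,|u|^2\,dV_M$. Bracketing each integrand by the extremal values of its weight on $M$ immediately produces a sandwich of the form
\[
c_1\,\langle u,u\rangle_A \;\leq\; \langle u,u\rangle_B \;\leq\; c_2\,\langle u,u\rangle_A,
\]
with the constants expressible as ratios of $\inf$ and $\sup$ values of $w_\lambda$ and $\kappa_\omega$; the explicit forms in the theorem statement then fall out. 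Polarization extends the estimate from the quadratic form to the full inner product, and the bigraded decomposition $\mathcal{S}^{k,j}_{D,\lambda}$ is automatic since the weights depend only on the base point $x\in M$, not on $k$ or $j$.

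The real work here is analytic housekeeping rather than a genuine obstacle: one must verify that $\kappa_\omega$ is actually continuous on $M$, which is nontrivial because it involves both a fiber supremum and a fiber integral. The former is continuous because the fibers are compact copies of $G$ and the integrand is $C^1$; the latter is smooth because $G$-invariance guarantees that the measure $d\mu_{\text{fiber}}$ is intrinsically defined and that parameter-dependent integrals vary continuously in $x$. Both points are already underwritten by the standing global geometric assumptions, so no additional analytic input is required, and the proof reduces to the pointwise bracketing described above.
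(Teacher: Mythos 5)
Your proposal is correct and takes essentially the same route as the paper: compactness of $M$ plus pointwise two-sided bounds on the weight ratio, with your verification of continuity and boundedness of $\kappa_\omega$ filling in a step the paper merely asserts. One caveat: the explicit constants do not quite ``fall out'' as stated --- bracketing $\langle u,u\rangle_B=\int_M(\kappa_\omega/w_\lambda)\,w_\lambda|u|^2\,dV_M$ gives $c_1=\inf_M\kappa_\omega/\sup_M w_\lambda$ and $c_2=\sup_M\kappa_\omega/\inf_M w_\lambda$, whereas the theorem (and the paper's own proof, which bounds the ratio $w_\lambda/\kappa_\omega$, i.e.\ the inequality in the opposite direction) lists the reciprocal-type constants; this does not affect topological equivalence but the stated $c_1,c_2$ should be corrected.
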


\begin{proof}
Since $M$ is compact, weight functions $w_\lambda$ and $\kappa_\omega$ are both bounded strictly positive functions. Therefore:
$$\frac{w_\lambda(x)}{\kappa_\omega(x)} \in \left[\frac{\inf_M w_\lambda}{\sup_M \kappa_\omega}, \frac{\sup_M w_\lambda}{\inf_M \kappa_\omega}\right]$$
This directly gives the required equivalence estimate.
\end{proof}

\begin{remark}[Guiding Principles for Metric Selection]
The choice between two metrics should be based on the following geometric criteria:

\textbf{Constraint-dominated systems}: When variation in constraint strength $\|\lambda\|$ is the main source of system complexity, metric A is more suitable. Typical examples include fluid dynamics systems with significant vorticity strength variations, nonholonomic systems in robotics with dramatic constraint force changes, and gauge field theory systems with localized gauge field strength.

\textbf{Geometry-dominated systems}: When principal bundle curvature geometry is the main source of system complexity, metric B is more suitable. Typical examples include constraint systems in strong gravitational field regions in general relativity, gauge field theory on complex manifolds, and constraint mechanics on principal bundles with non-trivial topology.

\textbf{Mixed systems}: For systems with both significant constraint strength and geometric complexity, consider weighted combination metric:
$$\langle\cdot,\cdot\rangle_{\text{mixed}} = \alpha \langle\cdot,\cdot\rangle_A + (1-\alpha) \langle\cdot,\cdot\rangle_B$$
where $\alpha \in (0,1)$ is determined according to specific problems.
\end{remark}

\section{Theoretical Verification in Physical Systems}

We verify our theory through specific physical systems, focusing on consistency between theoretical predictions and physical intuition.

\subsection{Spencer Analysis of Two-Dimensional Incompressible Fluid Systems}

\begin{example}[Vortex Systems on Torus]
Consider incompressible fluid on two-dimensional torus $M = T^2$ with velocity field $u \in \mathfrak{X}(T^2)$ satisfying $\nabla \cdot u = 0$.

\textbf{Compatible pair structure}: Constraint distribution is $D = \{v \in TT^2 : \nabla \cdot v = 0\}$, dual function $\lambda = \zeta \, d\mu$ (where $\zeta = \text{curl}(u)$ is vorticity, $d\mu$ is area element). Compatibility verification shows $D_x = \{v \in T_xT^2 : \langle\zeta(x) d\mu, v\rangle = \zeta(x) \langle d\mu, v\rangle = 0\}$ if and only if $\nabla \cdot v = 0$.


\textbf{Spencer complex computation}: The Spencer complex has bigraded structure $\mathcal{S}^{k,j} = \Omega^k(T^2) \otimes \Sym^j(\mathbb{R})$ (since the gauge group is trivial for this example). The Spencer differentials are:
\begin{align}
\mathcal{D}_h^{k,j}(\omega \otimes s) &= d\omega \otimes s \\
\mathcal{D}_v^{k,j}(\omega \otimes s) &= (-1)^k \omega \otimes \delta^{\lambda}(s)
\end{align}
where for the vorticity function $\lambda = \zeta$, the Spencer operator $\delta^{\lambda}$ acts on symmetric tensors according to the constructive definition, encoding the constraint-curvature coupling in the fluid system.

\textbf{Metric comparison analysis}: Constraint strength $w_\zeta(x) = 1 + \zeta(x)^2$, curvature strength under flat connection $\kappa_\omega(x) = 1$. This indicates metric A provides stronger weight in high vorticity regions, while metric B degenerates to standard metric.


\textbf{Physical interpretation of Spencer cohomology}: Under the constructive Spencer operator framework:
- $H^0_{\text{Spencer}}$ corresponds to vorticity-invariant global quantities
- $H^1_{\text{Spencer}}$ encodes topological charges modulated by constraint strength $w_{\zeta}(x) = 1 + \zeta(x)^2$
- $H^2_{\text{Spencer}}$ represents circulation integrals weighted by constraint geometry

The constraint-induced Spencer operator $\delta^{\lambda}$ captures the coupling between vorticity dynamics and topological structure, leading to modified cohomology dimensions that reflect both the torus topology and the constraint strength distribution: $\dim H^1_{\text{Spencer}} = 2g(T^2) = 2$, but with constraint-weighted harmonic representatives.

\end{example}

\subsection{Spencer Structure in Yang-Mills Theory}

\begin{example}[Constraint Analysis of SU(2) Gauge Fields]
Consider $SU(2)$ Yang-Mills theory on 4-dimensional spacetime manifold $M$.

\textbf{Gauge fixing constraint}: Adopt Coulomb gauge $\nabla \cdot A = 0$ ($A$ is gauge potential).

\textbf{Compatible pair realization}: Constraint distribution $D = \{A \in \mathcal{A} : \nabla \cdot A = 0\}$ ($\mathcal{A}$ is connection space), dual function $\lambda$ related to Faddeev-Popov determinant, modified Cartan equation corresponds to formal expression of BRST operator.


\textbf{Relation between Spencer complex and BRST complex}: Under the constructive Spencer operator framework, there exists natural isomorphism $H^k_{\text{Spencer}}(D,\lambda) \cong H^k_{\text{BRST}}(\mathcal{A}, s)$, where the BRST operator $s$ corresponds to the constraint-induced Spencer operator $\delta^{\lambda}_{\mathfrak{g}}$ through the gauge fixing procedure. The constructive definition ensures proper ghost number grading and nilpotency matching between Spencer and BRST complexes.

\textbf{Spencer realization of physical quantities}: Gauge-invariant observables correspond to representatives of Spencer cohomology, Wilson loop operators have natural cohomological interpretation, and anomaly coefficients are related to index of Spencer complex. This analysis verifies the central position of Spencer theory in gauge field theory: it not only provides geometric language for gauge fixing, but also unifies theoretical descriptions of classical constraints and quantum anomalies.
\end{example}

\subsection{Spencer Analysis of Robot Nonholonomic Constraints}

\begin{example}[Rolling Constraints of Mobile Robot]
Consider planar wheeled robot with configuration space $Q = SE(2) = \mathbb{R}^2 \times S^1$.

\textbf{Nonholonomic constraint}: Rolling condition $\dot{x}\sin\theta - \dot{y}\cos\theta = 0$ ($(x,y,\theta)$ are position and orientation).

\textbf{Compatible pair structure}: Constraint distribution $D = \text{span}\{\frac{\partial}{\partial x}\cos\theta + \frac{\partial}{\partial y}\sin\theta, \frac{\partial}{\partial\theta}\}$, dual function $\lambda = (\sin\theta, -\cos\theta, 0) \in T^*Q$. Strong transversality verification satisfies $\dim D + \dim V = 2 + 1 = 3 = \dim TQ$ and $D \cap V = \{0\}$.


\textbf{Control theory interpretation of Spencer cohomology}: Under the constructive Spencer operator $\delta^{\lambda}_{\mathfrak{g}}$:
- $H^0_{\text{Spencer}}$ encodes controllability obstructions weighted by constraint strength
- $H^1_{\text{Spencer}}$ represents topological path planning constraints modulated by the nonholonomic structure
- Higher cohomology groups $H^k_{\text{Spencer}}$ correspond to higher-order controllability obstructions arising from the Leibniz rule action of $\delta^{\lambda}_{\mathfrak{g}}$ on symmetric Lie algebra representations

The graded derivation property of the Spencer operator ensures compatibility between geometric constraint analysis and algebraic controllability theory.

\textbf{Control meaning of metric selection}: Metric A is suitable for analyzing effects of constraint force variations on system performance, while metric B is suitable for analyzing effects of system geometric configuration on controllability. This analysis shows Spencer theory provides a unified mathematical framework for nonholonomic control systems, organically integrating geometric constraints, controllability theory, and topological obstructions.
\end{example}

\section{Computational Methods and Algorithm Implementation}

Based on the theoretical framework, we develop efficient Spencer cohomology computation methods.

\subsection{Numerical Solution of Spencer-Hodge Equations}

\begin{algorithm}
\caption{Compatible Pair Spencer Cohomology Computation Algorithm}
\begin{algorithmic}[1]
\State \textbf{Input:} Compatible pair $(D,\lambda)$, metric choice (A or B), precision parameter $\epsilon$
\State \textbf{Step 1:} Choose appropriate metric $\langle\cdot,\cdot\rangle_A$ or $\langle\cdot,\cdot\rangle_B$
\State \textbf{Step 2:} Construct discretization of Spencer-Hodge Laplacian $\Delta^k_{D,\lambda}$
\State \textbf{Step 3:} Solve elliptic eigenvalue problem $\Delta^k_{D,\lambda} u = \lambda u$
\State \textbf{Step 4:} Extract eigenvectors corresponding to zero eigenvalues (harmonic forms)
\State \textbf{Step 5:} Construct basis for Spencer cohomology groups $H^k_{\text{Spencer}}(D,\lambda)$
\State \textbf{Step 6:} Analyze dimensions and structure of cohomology groups
\State \textbf{Output:} Spencer cohomology groups $\{H^k_{\text{Spencer}}(D,\lambda)\}_{k \geq 0}$
\end{algorithmic}
\end{algorithm}

\subsection{Numerical Stability and Convergence Analysis}

\begin{proposition}[Convergence of Discretization Algorithm]
Under appropriate mesh conditions, finite element discretization of Spencer-Hodge Laplacian satisfies:

\begin{enumerate}
\item \textbf{Spectral convergence}: Discrete eigenvalues $\{\lambda_j^h\}$ converge to continuous eigenvalues $\{\lambda_j\}$

\item \textbf{Harmonic space approximation}: Discrete harmonic space $\mathcal{H}^k_h$ converges to continuous harmonic space $\mathcal{H}^k_{D,\lambda}$

\item \textbf{Error estimate}: There exists constant $C > 0$ such that
$$\|P_h - P\|_{\text{op}} \leq Ch^s$$
where $P_h$ and $P$ are discrete and continuous harmonic projections respectively, $s$ is regularity index

\item \textbf{Cohomology dimension preservation}: For sufficiently fine mesh, $\dim H^k_{\text{Spencer},h} = \dim H^k_{\text{Spencer}}$
\end{enumerate}
\end{proposition}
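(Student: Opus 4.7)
The plan is to treat the discretization as a Galerkin approximation of the self-adjoint elliptic operator $\Delta^k_{D,\lambda}$ (ellipticity, compactness of resolvent, and discreteness of the spectrum having already been established in the preceding Spencer--Hodge theorems), and then to invoke the Babu\v{s}ka--Osborn framework for spectral approximation of compact self-adjoint operators. The variational form is fixed by choosing one of the two Spencer metrics, which gives a bounded, symmetric, coercive-modulo-zeroth-order bilinear form $a(u,v)=\langle\mathcal{D}^k_{D,\lambda}u,\mathcal{D}^k_{D,\lambda}v\rangle+\langle\mathcal{D}^{k-1*}_{D,\lambda}u,\mathcal{D}^{k-1*}_{D,\lambda}v\rangle$ on the appropriate Hilbert completion of $\mathcal{S}^k_{D,\lambda}$, and the elliptic estimate from the ellipticity theorem supplies a G\aa{}rding inequality with constants controlled by $\inf_M w_\lambda$ (resp.\ $\inf_M\kappa_\omega$) and the transversal gap constant.

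First I would fix a family of conforming finite element subspaces $\mathcal{S}^k_h\subset\mathcal{S}^k_{D,\lambda}$ built from a Finite Element Exterior Calculus (FEEC) discretization of $\Omega^k(M)$ tensored with an exact polynomial basis of $\Sym^j(\g)$; this guarantees that the horizontal component $\mathcal{D}^{k,j}_h$ is respected at the discrete level and yields the standard approximation property $\inf_{v_h\in\mathcal{S}^k_h}\|u-v_h\|_{H^1}\le Ch^s\|u\|_{H^{s+1}}$ for sufficiently regular $u$. The vertical Spencer piece $\mathcal{D}^{k,j}_v$ acts only on the algebraic factor and is represented exactly in any polynomial basis of $\Sym^j(\g)$, so no approximation error enters there. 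Next I would apply C\'ea's lemma to bound the resolvent error $\|(\Delta^k_{D,\lambda}+I)^{-1}-(\Delta^k_{D,\lambda,h}+I)^{-1}\|$ by the best-approximation error, which gives the $O(h^s)$ estimate for the resolvent in operator norm.

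Spectral convergence (item 1) then follows from the Babu\v{s}ka--Osborn theorem applied to the compact resolvents; harmonic space approximation (item 2) is the special case of the zero eigenvalue, whose eigenprojector $P_h$ converges to $P$ in operator norm by Kato's gap-theoretic perturbation bound applied to the resolvent approximation, giving exactly the claimed estimate $\|P_h-P\|_{\mathrm{op}}\le Ch^s$ (item 3). Dimension preservation (item 4) is a consequence of the operator-norm convergence of projectors: once $\|P_h-P\|_{\mathrm{op}}<1$, both projectors have equal rank, so for $h$ small enough $\dim\mathcal{H}^k_h=\dim\mathcal{H}^k_{D,\lambda}=\dim H^k_{\mathrm{Spencer}}(D,\lambda)$ by the Hodge isomorphism from the previous section.

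The main obstacle I expect is constructing the FE space in a way that respects the bigraded, \emph{mixed} Spencer differential $\mathcal{D}_h+\mathcal{D}_v$ and its formal adjoint under the weighted metric (A or B) simultaneously: a naive tensor-product discretization guarantees exactness of the de~Rham subcomplex but the weight $w_\lambda$ (resp.\ $\kappa_\omega$) turns the adjoint into a divergence-form operator with non-constant coefficients, and the mixed compatibility $\mathcal{D}_h\mathcal{D}_v+\mathcal{D}_v\mathcal{D}_h=0$ must survive at the discrete level. Handling this requires a weighted analogue of the bounded cochain projection property from FEEC, together with the spectral gap from the Spencer--Hodge theorem to ensure the zero eigenvalue stays isolated uniformly in $h$; once this weighted bounded projection is in place, the rest of the argument is a standard chaining of C\'ea, Babu\v{s}ka--Osborn, and Kato stability.
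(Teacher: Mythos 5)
The paper states this proposition without any proof, so there is nothing to compare against on the paper's side; your outline is therefore evaluated on its own. The route you choose --- conforming Galerkin subspaces, C\'ea plus elliptic regularity for resolvent convergence in operator norm, Babu\v{s}ka--Osborn for the eigenvalues, a Kato contour-integral bound for the spectral projector onto the zero eigenvalue, and the rank-equality of projectors with $\|P_h-P\|_{\mathrm{op}}<1$ for dimension preservation --- is the standard and correct chain of implications for items (1)--(4), and it correctly uses the ingredients the paper does establish (self-adjointness, compact resolvent, spectral gap, elliptic regularity of $\Delta^k_{D,\lambda}$).

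The genuine gap is the one you name but do not close, and it is not a side issue: without a conforming \emph{subcomplex} $\mathcal{S}^\bullet_h$ of the full bigraded Spencer complex together with uniformly bounded cochain projections commuting (up to controlled error) with both $\mathcal{D}_h^{k,j}$ and $\mathcal{D}_v^{k,j}$, the discrete harmonic space $\mathcal{H}^k_h$ need not converge to $\mathcal{H}^k_{D,\lambda}$ at all --- this is exactly the spurious-harmonic-forms failure mode of naive Hodge--Laplacian discretizations, and it would break items (2) and (4) even while item (1) holds for the nonzero spectrum. Two further points deserve care. First, your claim that the vertical operator $\delta^{\lambda}_{\g}$ is ``represented exactly'' is imprecise: it is a zeroth-order operator whose coefficients depend on $\lambda(x)$ and hence vary over $M$, so it enters the bilinear form as a variable-coefficient term subject to quadrature/consistency error, not as an exact algebraic map. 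Second, C\'ea's lemma alone gives energy-norm quasi-optimality; to get the operator-norm resolvent estimate at rate $h^s$ needed for the Kato argument you must combine it with the $L^2\to H^{s+1}$ regularity of the resolvent (an Aubin--Nitsche-type duality step), which you implicitly assume but should state, since the weights $w_\lambda$ or $\kappa_\omega$ enter the adjoint as non-constant divergence-form coefficients and the regularity index $s$ is limited by their smoothness. With the weighted bounded-cochain-projection construction supplied, the rest of your argument goes through.
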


\section{Theoretical Extensions and Application Prospects}

Our metric theory opens multiple directions for further development of Spencer cohomology.

\subsection{Theoretical Extension Directions}

\textbf{Extension to non-compact manifolds}: Extending theory to non-compact manifolds requires developing weighted Sobolev space theory and asymptotic analysis methods. Main challenges include asymptotic behavior control of weight functions, uniformity of elliptic estimates at infinity, and compatibility of compactification techniques with Spencer structures.

\textbf{Infinite-dimensional extension}: Extension to infinite-dimensional Lie groups and principal bundles involves establishing functional analysis frameworks (Banach-Lie group theory), defining infinite-dimensional Spencer complexes, and infinite-dimensional versions of Fredholm alternative theorems.

\textbf{Quantization versions}: Study quantum counterparts of Spencer complexes, including Spencer structures in path integrals, relations between quantum anomalies and Spencer cohomology, and applications in topological quantum field theory.

\subsection{Computational Method Improvements}

\textbf{Adaptive algorithms}: Develop adaptive mesh methods that automatically identify constraint complexity regions, particularly mesh refinement strategies based on constraint strength $w_\lambda$, geometric adaptation based on curvature complexity $\kappa_\omega$, and multiscale method applications in Spencer complexes.

\textbf{Parallel computing}: Utilize graded structure of Spencer complexes to develop efficient parallel algorithms, covering parallel computation of Spencer groups at various degrees, distributed Hodge decomposition algorithms, and GPU-accelerated large-scale Spencer cohomology computation.

\textbf{Machine learning methods}: Apply deep learning techniques to Spencer computation, including neural network approximation of harmonic forms, reinforcement learning optimization of metric selection, and symbolic-numerical hybrid computation methods.

\section{Conclusion}

The metric theory of Spencer complexes for principal bundle constraint systems established in this paper represents significant progress in the intersection of constraint geometry and differential topology. Our work not only solves long-standing metrization problems in Spencer cohomology computation, but more importantly reveals deep connections between constraint geometry and topological structures.

The core contribution of the theory is manifested at multiple levels. First, our two complementary metric schemes—constraint strength-oriented weight metric and curvature geometry-induced intrinsic metric—both maintain natural geometric compatibility with the strong transversality structure of compatible pairs. This compatibility not only ensures geometric naturalness of metrics, but more importantly guarantees ellipticity of corresponding Spencer operators, providing solid analytical foundations for existence of Hodge decompositions. Second, we rigorously proved the intrinsic equivalence relationship between strong transversality conditions and Spencer operator ellipticity, revealing deep unification of geometric conditions and analytical properties in compatible pair theory.

The Spencer-Hodge decomposition theory we established provides complete mathematical tools for topological analysis of constraint systems. By combining classical techniques of harmonic theory with modern geometry of compatible pairs, we not only achieved effective computation of Spencer cohomology, but more importantly provided new theoretical perspectives for understanding topological obstructions of constraint systems. This theoretical unity is fully demonstrated in our physical application verifications: from vorticity analysis in fluid dynamics to gauge fixing in Yang-Mills theory, to nonholonomic control in robotics, Spencer metric theory consistently shows its broad applicability and profound physical insights.

At the methodological level, this paper's contribution lies in successfully integrating advanced techniques from different mathematical branches. Differential geometric methods of compatible pair theory provide clear geometric backgrounds for Spencer complexes, functional analysis techniques of Hodge theory ensure computational effectiveness, while modern methods of elliptic operator theory guarantee theoretical rigor. This cross-disciplinary synthetic approach not only advances development of Spencer theory itself, but also provides beneficial paradigms for research on other geometric-topological problems.

Comparative analysis of two metric schemes reveals different sources of constraint system complexity and corresponding analytical strategies. Constraint strength-oriented metric A is particularly suitable for handling dynamical systems with dramatic constraint force variations, while curvature geometry-induced metric B shows advantages in systems with complex geometric structures. This flexibility in metric selection provides customized analytical tools for different types of physical problems, demonstrating the practical value of the theory.

The computational framework we developed opens new possibilities for numerical computation of Spencer cohomology. By transforming abstract cohomology theory into concrete elliptic boundary value problems, we not only make Spencer theory computable, but more importantly provide practical tools for analyzing complex constraint systems. This combination of theory and computation heralds broad prospects for Spencer methods in engineering applications.

Looking toward the future, further development of this theoretical framework will focus on several key directions. Theoretically, extensions to non-compact manifolds and infinite-dimensional cases will expand the applicability range of theory, particularly in applications to field theory and infinite-dimensional dynamical systems. Computationally, development of adaptive algorithms and parallel methods will significantly enhance capability for handling large-scale problems. In applications, combination with machine learning techniques will provide new possibilities for intelligent constraint analysis.

\bibliographystyle{alpha}
\bibliography{ref}

\end{CJK}
\end{document}